\documentclass[a4paper,english]{article}
\usepackage[a4paper]{geometry}
\usepackage{amsthm,amsmath,amssymb,mathtools}
\usepackage{enumerate,enumitem,csquotes,verbatim}
\usepackage{setspace}
\usepackage[numbers]{natbib}
\usepackage{hyperref}
\usepackage{color}
\usepackage{graphicx}
\usepackage{array}
\newcolumntype{P}[1]{>{\centering\arraybackslash}p{#1}}

\pagestyle{plain}
\linespread{1.2}
\setlength{\parskip}{\medskipamount}

\theoremstyle{plain}
\newtheorem*{theorem*}{Theorem}
\newtheorem{theorem}{Theorem}
\newtheorem{lemma}[theorem]{Lemma}

\newtheorem{proposition}[theorem]{Proposition}
\newtheorem*{claim*}{Claim}

\newtheorem{problem}[theorem]{Problem}

\theoremstyle{definition}

\theoremstyle{remark}

\newcommand*{\abs}[1]{\lvert#1\rvert}

%%%%%%%%%%%%%%%%%%%%
\title{Monochromatic Edges in Complete Multipartite Hypergraphs}

\author{Teeradej Kittipassorn\thanks{\,Department of Mathematics and Computer Science, Faculty of Science, Chulalongkorn University, Bangkok 10330, Thailand; \texttt{teeradej.k@chula.ac.th}.}
  \and Boonpipop Sirirojrattana\thanks{\,Department of Mathematics and Computer Science, Faculty of Science, Chulalongkorn University, Bangkok 10330, Thailand; \texttt{bsiink13153@gmail.com}.}}

\begin{document}
\maketitle

%%%%%%%%%%%%%%%%%%%%
\begin{abstract}
Consider the following problem. In a school with three classes containing $n$ students each, given that their genders are unknown, find the minimum possible number of triples of same-gender students not all of which are from the same class. Muaengwaeng asked this question and conjectured that the minimum scenario occurs when the classes are all boy, all girl and half-and-half. In this paper, we solve many generalizations of the problem including when the school has more than three classes, when triples are replaced by groups of larger sizes, when the classes are of different sizes, and when gender is replaced by other non-binary attributes.

\end{abstract} 

%%%%%%%%%%%%%%%%%%%%
\section{Introduction}
A \textit{hypergraph} is a pair $(V, E)$ where $V$ is a finite set of vertices and $E$ is a collection of subsets of $V$. Each subset in $E$ is called an \textit{edge}. An \textit{$r$-uniform} hypergraph contains only edges of size $r$ and if it contains all possible edges of size $r$, this $r$-uniform hypergraph is said to be \textit{complete}.

For decades, many researchers have been studying hypergraphs as a generalization of graphs and many theorems in graph theory have been extended to hypergraphs (see \cite{Frank}, \cite{Gyarfas}, \cite{Katona}, \cite{Keevash} and \cite{Mubayi}). Graph coloring which is a popular topic is also a part of those interesting extensions (see \cite{Berge} and \cite{Erdos}). Given a hypergraph, an \emph{$m$-coloring} is an assignment of a color to each vertex of the hypergraph from $m$ available colors. An edge is said to be \textit{monochromatic} if all vertices in it have the same color. Some later researches (see \cite{Bujt} and \cite{Cowen}) focused on \textit{proper coloring} and \textit{defective coloring} which involve colorings where monochromatic edges do not exist or exist only in some limited amount. The complexity of general hypergraphs has led to study  focusing on hypergraphs which have an orderly and symmetric structure, for example, $k$-partite hypergraphs.

A \textit{balanced $k$-partite $r$-uniform hypergraph} has $k$ vertex classes $V_1, V_2, … , V_k$ of the same size. There are two natural generalizations of $k$-partite graphs to hypergraphs. First, each edge is an $r$-subset of $\displaystyle\bigcup_{i=1}^{k}V_i$, all of whose vertices are from different classes. The second definition of edges is that each edge is an $r$-subset, not all of whose vertices are from the same class. In the paper, we will use the latter definition. 

The problem in the abstract can be restated in the language of hypergraphs as follows.
\begin{problem}
Which $2$-coloring minimizes the number of monochromatic edges of a balanced complete tripartite $3$-uniform hypergraph?
\end{problem}This question was asked by Muaengwaeng \cite{Muaeng}. Given that those two colors are red and blue, she conjectured that the minimum coloring occurs when those three classes are all blue, all red and half-and-half. In this paper, we solve many generalizations of Problem $1$. First, we study a balanced complete $k$-partite $r$-uniform hypergraph.  
\begin{theorem}\label{thm:1}
Let $n \geqslant r \geqslant 3$ and $k \geqslant 2$. The $2$-coloring minimizing the number of monochromatic edges of a balanced complete $k$-partite $r$-uniform hypergraph with $n$ vertices in each class is as follows:
\begin{enumerate}
	\item Color all vertices of the first $ \lfloor \frac{k}{2} \rfloor $ classes with red.
	\item Color all vertices of the last $ \lfloor \frac{k}{2} \rfloor $ classes with blue.
	\item If there is another class, color the vertices of that class such that the number of red and blue vertices are as equal as possible.
\end{enumerate}Moreover, this coloring is unique up to a permutation of colors and classes.
\end{theorem}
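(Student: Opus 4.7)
Write $x_i$ for the number of red vertices in class $V_i$, set $R=\sum_{i=1}^k x_i$ and $B=kn-R$. A monochromatic edge is a monochromatic $r$-subset not contained in a single class, so
\[
M \;=\; \binom{R}{r}+\binom{B}{r} \;-\; \sum_{i=1}^{k}\Bigl[\binom{x_i}{r}+\binom{n-x_i}{r}\Bigr].
\]
This formula exposes the two competing pressures in the problem: the first bracket is minimised when $R$ and $B$ are balanced, while the second sum is maximised when each class is monochromatic. My plan is a two-stage optimisation: first fix $R$ and optimise the distribution $(x_1,\ldots,x_k)$ across classes, and then optimise over $R$.

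Set $g(x)=\binom{x}{r}+\binom{n-x}{r}$. The second difference $g(x+1)+g(x-1)-2g(x)=\binom{x-1}{r-2}+\binom{n-x-1}{r-2}$ is non-negative on $\{1,\ldots,n-1\}$, and strictly positive at all but possibly one central value in the degenerate range. Thus for any two classes with $0<x_i\le x_j<n$, the spreading move $(x_i,x_j)\mapsto(x_i-1,x_j+1)$ preserves $R$ and does not decrease $\sum_i g(x_i)$, and iterating all the way to the boundary strictly increases the sum. Hence any optimum for fixed $R$ has at most one mixed class, so an optimal configuration is parametrised by a triple $(s,t,m)$ with $s$ all-red classes, $t$ all-blue classes, and (when $s+t=k-1$) one mixed class containing $m$ red vertices with $0<m<n$.

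Substituting back gives
\[
M(s,t,m)=\binom{sn+m}{r}+\binom{tn+n-m}{r}-(s+t)\binom{n}{r}-\binom{m}{r}-\binom{n-m}{r},
\]
with the convention $m=0$ covering the case of no mixed class. Because $y\mapsto\binom{y}{r}+\binom{kn-y}{r}$ is strictly convex, at any optimum $sn+m$ must lie as close to $kn/2$ as possible, forcing $|s-t|\le 1$. When $k$ is even, taking $s=t=k/2$ with $m=0$ simultaneously minimises the first pair of terms (to $2\binom{kn/2}{r}$) and maximises the subtracted sum (to $k\binom{n}{r}$); no competing configuration can match either bound, and strict convexity forces uniqueness.

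The delicate case is odd $k$, where the two objectives genuinely conflict, and the problem reduces to comparing the all-monochromatic candidate $M_A:=M\bigl(\tfrac{k-1}{2},\tfrac{k+1}{2},0\bigr)$ against the split-middle candidate $M_B:=M\bigl(\tfrac{k-1}{2},\tfrac{k-1}{2},\lfloor n/2\rfloor\bigr)$. Writing $m=\lfloor n/2\rfloor$ and $N=(k-1)n/2$,
\[
M_A-M_B=\Bigl[\tbinom{N}{r}+\tbinom{N+n}{r}-\tbinom{N+m}{r}-\tbinom{N+n-m}{r}\Bigr]-\Bigl[\tbinom{n}{r}-g(m)\Bigr].
\]
By strict convexity of $\binom{\cdot}{r}$ the first bracket is positive (the pair $(N,N+n)$ is strictly more spread about $N+n/2$ than $(N+m,N+n-m)$), while the second bracket is non-negative since $g$ is minimised near $n/2$. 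The main obstacle is to show that the first bracket strictly dominates the correction $\binom{n}{r}-g(m)$ for every admissible $k,n,r$. I expect to handle this via the finite-difference identity $\binom{y+1}{r}-\binom{y}{r}=\binom{y}{r-1}$, repeatedly telescoping both sides and using monotonicity of $\binom{\cdot}{r-1}$ to produce explicit termwise bounds; crucially, the first bracket scales with $k$ whereas the correction depends only on $n$ and $r$. Uniqueness up to permutation of classes and colours then follows from the strictness of every convexity inequality invoked along the way.
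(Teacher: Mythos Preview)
Your Stage~1 (fixing $R$ and spreading the $x_i$ until at most one class is mixed) is correct and is essentially the paper's Claim, proved there by the same swap-to-the-boundary argument.

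Stage~2 has genuine gaps. First, the sentence ``strict convexity of $y\mapsto\binom{y}{r}+\binom{kn-y}{r}$ forces $sn+m$ as close to $kn/2$ as possible, hence $|s-t|\le1$'' is not a proof: $M(s,t,m)$ depends on $R=sn+m$ not only through $\binom{R}{r}+\binom{kn-R}{r}$ but also through $(s+t)\binom{n}{r}$ and $g(m)$, both of which change with $R$. You never rule out competitors with $s=t=(k-1)/2$ and $m\neq\lfloor n/2\rfloor$, nor configurations with $|s-t|\ge2$; each of these requires its own short convexity argument that you have not supplied. Second, the heart of the odd-$k$ case is the inequality $M_A>M_B$, and here you only offer a plan (``I expect to handle this via telescoping\ldots the first bracket scales with $k$''). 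A scaling heuristic is not enough: you need the inequality for every $k\ge3$, including $k=3$ where there is no asymptotic cushion. The telescoping can in fact be made to work (pair off the $j$-th summands and use that $\phi(y)=\binom{N+y}{r-1}-\binom{y}{r-1}$ is increasing), but as written this is a hope, not an argument.

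The paper avoids all of this by replacing your two-candidate comparison with a single monotonicity statement: it shows directly that $M(H,c^*_X)>M(H,c^*_{X+1})$ for every $X\le\lfloor kn/2\rfloor-1$, by computing $\bigtriangleup_i M(H,c^*_X)$ for one recoloured vertex in the (unique) non-full class and bounding it in two short cases via Proposition~4. This one inequality simultaneously handles the reduction to $|s-t|\le1$, the choice of $m$, and the comparison $M_A>M_B$, so nothing like your final estimate is needed.
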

The case where $r=k=3$ was presented in a conference \cite{Boon} by the second author. The key idea is to calculate the change in the number of  monochromatic edges when a vertex is recolored. We use this to find the minimum coloring among those with a fixed number of red  vertices. Then we compare these minimum colorings.

The proof of Theorem $2$ gives a clue on how to prove a more general case when each class does not contain the same number of vertices.

\begin{theorem}\label{thm:2}
	For any unbalanced complete tripartite $3$-uniform hypergraph $H$ with the numbers of vertices of the first, second and third classes, $ n_1 \leqslant n_2 \leqslant n_3$ where $n_3 \geqslant 3$ and $n_1+n_2+n_3 = N$, a $2$-coloring minimizing the number of monochromatic edges of $H$ is as follows:
	\begin{enumerate} 
		\item If $n_1 + n_2 > n_3$, color all vertices in the second and third classes with blue and red, respectively, then and color $\left\lceil \frac{N^2-3N-n^2_1-2n_1n_3+3n_1+4n_3}{2(N-n_1)}\right\rceil-n_3$ vertices in the first class with red.
		\item If $n_1+n_2 \leqslant n_3$, then color all vertices in the first and second classes with red and color the third class with blue.
	\end{enumerate}Moreover, each coloring is unique up to a permutation of colors unless $(n_1,n_2,n_3)=(2,n,n+1)$ for $n \geqslant 2$ in which case there is another extremal coloring namely a coloring such that vertices in the third class are all red, all vertices in the second class are all blue and the first class has one red and one blue vertices.
\end{theorem}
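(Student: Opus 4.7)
The plan is to follow the strategy of Theorem~\ref{thm:1}: first reduce to a short list of candidate colourings via a swap argument based on discrete convexity, then solve a one-variable optimisation in each case and compare.

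Write $r_i$ for the number of red vertices in class $i$, let $b_i = n_i - r_i$, and set $R = r_1 + r_2 + r_3$ and $B = N - R$. Every monochromatic triple is all red or all blue, and those lying inside a single class are not edges of $H$, so
\[
M(r_1,r_2,r_3) = \binom{R}{3} + \binom{B}{3} - \sum_{i=1}^3 \left[\binom{r_i}{3} + \binom{b_i}{3}\right].
\]
For fixed $R$, minimising $M$ is equivalent to maximising $\sum_i f_i(r_i)$, where $f_i(x) = \binom{x}{3} + \binom{n_i-x}{3}$, and a short computation shows the discrete second difference of $f_i$ equals $n_i - 2 \ge 0$. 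Hence each $f_i$ is discretely convex, and if classes $i,j$ were both \emph{partial} (meaning $0 < r_i < n_i$ and $0 < r_j < n_j$), then transferring one red vertex between them preserves $R$ while, by convexity, at least one of the two transfer directions does not increase $M$. Iterating, I may assume an optimal colouring has at most one partial class, with the other two classes each fully monochromatic.

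This reduces the problem to a finite enumeration: pick the partial class $p \in \{1,2,3\}$ (or declare ``no partial class''), pick the colours of the two monochromatic classes, and optimise $M$ over $r_p \in \{0, 1, \ldots, n_p\}$. In each such case a direct calculation gives $\Delta^2 M(r_p) = N - n_p > 0$, so $M$ is strictly convex in $r_p$ and the discrete derivative $\Delta M$ is linear; its unique real root $\alpha_p$ determines the integer optimum $\lceil \alpha_p \rceil$. In the case where class~2 is all blue, class~3 is all red and class~1 is partial, the algebra produces precisely the ceiling expression in the theorem statement. Comparing the resulting candidate minima across all cases, using $n_1 \le n_2 \le n_3$ to rule out the inferior ones, I expect to see the dichotomy of the theorem emerge: the partial-class configuration wins whenever $n_1 + n_2 > n_3$, and the purely monochromatic configuration (classes~1 and~2 red, class~3 blue) wins when $n_1 + n_2 \le n_3$, with the two regimes agreeing up to a colour swap at the boundary $n_1 + n_2 = n_3$.

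For uniqueness, I would check that each comparison is strict away from the exceptional family. The only way a second extremal colouring can appear (beyond swapping all colours) is if $\alpha_1$ is an integer, in which case both $r_1 = \alpha_1$ and $r_1 = \alpha_1 + 1$ yield the same minimum value of $M$. Imposing this integrality condition on the explicit formula for $\alpha_1$, together with $n_1 \le n_2 \le n_3$, $n_3 \ge 3$ and $n_1 + n_2 > n_3$, should pin down the family $(n_1, n_2, n_3) = (2, n, n+1)$ for $n \ge 2$, where $\alpha_1 = 0$ and hence $r_1 = 0$ (the prescribed colouring) is tied with $r_1 = 1$, precisely the extra extremal colouring named in the theorem. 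The main obstacle is the careful bookkeeping in the case-by-case comparisons, in particular verifying strictness everywhere except on the exceptional family and handling the boundary $n_1 + n_2 = n_3$ between the two regimes.
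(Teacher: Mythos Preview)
Your plan is essentially the paper's proof, reorganised. The convexity of $f_i(x)=\binom{x}{3}+\binom{n_i-x}{3}$ is exactly the content of the paper's swap Lemmas~8--9, and your ``optimise $r_p$ within each form, then compare forms'' is the same case analysis as the paper's Cases~1--5B, just sliced by canonical form rather than by the total red count $X$. One genuine streamlining: because discrete convexity holds for all $n_i\geqslant 1$ (the second difference $n_i-2$ is nonnegative for $n_i\geqslant 2$, and $f_i\equiv 0$ for $n_i\leqslant 2$), your reduction to at most one partial class does not need the hypothesis $n_i\geqslant 3$ that forces the paper into a separate small-class analysis in Section~4.2.

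The uniqueness sketch is where you are a bit too optimistic. Integrality of $\alpha_1$ is necessary for a tie within the form $F_8$, but it is \emph{not} equivalent to the exceptional family: whenever $n_2=n_3$ the colour swap composed with the transposition of classes~2 and~3 is an automorphism of $H$ sending $r_1\mapsto n_1-r_1$, so $M$ is symmetric about $n_1/2$ and $\alpha_1=(n_1-1)/2$ is integral whenever $n_1$ is odd --- yet these ties are absorbed by symmetry and give no new extremal colouring. You will need to argue that among all $(n_1,n_2,n_3)$ with $\alpha_1$ integral, only $(2,n,n+1)$ produces a tie not explained by a hypergraph automorphism; the paper reaches the same endpoint by a different route, tracking when the comparison $F_6$ versus $F_8$ degenerates. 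You should also note that your reduction step is only weakly monotone when $n_i+n_j\leqslant 4$ (the swap second difference is $n_i+n_j-4$), so for $n_1+n_2\leqslant 4$ a short direct check is still needed to rule out optima with two partial classes.
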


The proof consists of two parts. First, by swapping a red vertex and a blue vertex in different classes, we conclude that a minimum coloring must be in one of the $12$ canonical  forms. Then we compares the numbers of monochromatic edges between the forms.

Finally, we study the number of monochromatic edges of balanced complete $k$-partite $r$-uniform hypergraphs except, this time, up to three colors will be available. The problem of minimizing the number of monochromatic edges gets more complicated as it is not a simple two-way comparison between red and blue. The results are divided upon the remainder of the number of vertex classes, $k$, divided by $3$. We are able to solve the cases $k\equiv 0,1 \ (\textrm{mod}\ 3)$.
\begin{theorem}\label{thm:3}
	Let $n \geqslant r \geqslant 3$ and $k \geqslant 3$. For any balanced complete $k$-partite $r$-uniform hypergraph, $H$, with $n$ vertices in each class, if $k\equiv 0\ (\textrm{mod}\ 3)$, the $3$-coloring minimizing the number of monochromatic edges of $H$ is as follows:
	\begin{enumerate} 
		\item Color all vertices of the first $\frac{k}{3}$ classes with red.
		\item Color all vertices of the next $\frac{k}{3}$ classes with blue.
		\item Color all vertices of the last $\frac{k}{3}$ classes with green.
	\end{enumerate} If $k\equiv 1 \ (\textrm{mod}\ 3)$, the $3$-coloring minimizing the number of monochromatic edges of $H$ is as follows:
	\begin{enumerate} 
		\item Color all vertices of the first $\left\lfloor\frac{k}{3}\right\rfloor$ classes with red.
		\item Color all vertices of the next $\left\lfloor\frac{k}{3}\right\rfloor$ classes with blue.
		\item Color all vertices of the next $\left\lfloor\frac{k}{3}\right\rfloor$ classes with green.
		\item Color all vertices of the last class such that the number of red, blue and green vertices are as equal as possible.
	\end{enumerate} Moreover, each coloring is unique up to a permutation of colors.
\end{theorem}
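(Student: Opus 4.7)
The plan is to split the minimisation into (i) how concentrated each colour should be across the $k$ classes, and (ii) how the three sizes $|R|, |B|, |G|$ should compare. Setting $r_i := |V_i \cap R|$ and defining $b_i, g_i$ similarly, direct counting gives
\[
M = \sum_{c \in \{R, B, G\}} \left[\binom{|c|}{r} - \sum_{i=1}^{k}\binom{|V_i \cap c|}{r}\right] = \sum_c M_c,
\]
where $M_c$ counts the monochromatic $c$-edges not contained in a single class. For (i), fix $c$ with $|c| = qn + s$, $0 \leq s < n$; the sequence $(|V_1 \cap c|, \dots, |V_k \cap c|)$ is majorised by the \emph{concentrated} sequence with $q$ copies of $n$, one copy of $s$, and zeros elsewhere, so Karamata's inequality applied to the convex function $x \mapsto \binom{x}{r}$ gives
\[
M_c \geq M_c^\star(|c|) := \binom{|c|}{r} - q\binom{n}{r} - \binom{s}{r},
\]
with equality iff colour $c$ occupies $q$ full classes plus at most one partial class.

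For (ii), I would first prove that $M_c^\star$ is discretely convex in $|c|$ on $\{0, 1, \dots, kn\}$. Within each segment $qn \leq |c| < (q+1)n$, convexity is immediate from that of $\binom{x}{r}$ and $\binom{x - qn}{r}$; at each breakpoint $|c| = qn$, the gap between the right and left first differences equals $\binom{qn - 1}{r - 2} + \binom{n - 1}{r - 1} \geq 0$, extending convexity globally. Consequently $\sum_c M_c^\star(|c|)$ is a sum of discretely convex functions of $(|R|, |B|, |G|)$ under the constraint $|R| + |B| + |G| = kn$; it is Schur-convex, with its minimum attained at the integer triple whose components differ by at most one. When $k \equiv 0 \pmod{3}$ the balanced sizes each equal $kn/3$, a multiple of $n$, and concentration forces exactly $k/3$ monochromatic classes per colour, reproducing the proposed coloring. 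When $k \equiv 1 \pmod{3}$ the balanced sizes are of the form $\lfloor k/3 \rfloor n + s_c$ with $s_R + s_B + s_G = n$ and the $s_c$ differing by at most one (a quick check against $n \bmod 3 \in \{0, 1, 2\}$); the three partials fit together into a single class, again reproducing the proposed coloring with the prescribed balanced split of the last class. Uniqueness up to permutation of colours follows from the strict versions of Karamata's inequality and of Schur-convexity, which hold under the assumption $n \geq r \geq 3$.

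The main obstacle is the convexity verification for $M_c^\star$ at the breakpoints $|c| = qn$, together with the integer-rounding bookkeeping in the $k \equiv 1 \pmod{3}$ case when $3 \nmid n$ to match the exact ``as equal as possible'' prescription. Both reduce to elementary binomial identities that need to be handled with a little care, and a secondary subtlety is tracing equality through both convexity steps to obtain the stated uniqueness.
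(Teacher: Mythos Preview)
Your approach is correct and genuinely different from the paper's. The paper proceeds by local swapping lemmas (analogues of Lemmas 8--9) to reduce an arbitrary $3$-coloring to one of five canonical forms $F_1,\ldots,F_5$ (classified by how many classes are polychromatic and how the colours overlap), eliminates $F_4$ by a direct quadruple argument, and then for $k\equiv 1\pmod 3$ runs a chain of comparisons $F_3,F_5,F_2\to F_1$ before optimising inside $F_1$. Your route bypasses all of this: you lower-bound each $M_c$ by Karamata (concentration), then lower-bound the sum by Schur-convexity of the piecewise function $M_c^\star$, and finally check that the resulting bound is attained by the stated coloring. The paper's method is more elementary but case-heavy; yours is cleaner and makes transparent \emph{why} the two residues $k\equiv 0,1\pmod 3$ are tractable: these are exactly the cases where the Schur-minimising triple $(|R|,|B|,|G|)$ has remainders $s_c$ summing to $0$ or $n$, so the per-colour concentrated distributions can be realised simultaneously in the $k$ classes. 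For $k\equiv 2\pmod 3$ the remainders sum to $2n$ and no single coloring attains all three equalities at once, which is why your lower bound would cease to be tight there and the paper's canonical-form machinery becomes necessary.

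One point to tighten in your write-up: strict Karamata does not directly give uniqueness of the concentrated sequence, because $x\mapsto\binom{x}{r}$ is flat on $\{0,\ldots,r-1\}$ and the leftovers $s_c\approx n/3$ may fall in that range. What you actually need (and what a short separate argument gives) is that equality in $M_c=M_c^\star(|c|)$ forces exactly $q$ classes with $|V_i\cap c|=n$; once each colour has $\lfloor k/3\rfloor$ full classes, only one class remains and the leftovers are forced into it, yielding uniqueness. This is the only place your sketch needs an extra line beyond the standard inequalities.
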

We use a similar idea to the proof of Theorem $3$, but we need to develop some new lemmas to construct the canonical forms of the colorings.
 
 The rest of this paper is organized as follows. In Section \ref{sec2}, we introduce some notations and useful properties that will be used throughout the paper. Later, we consider some straightforward cases. Sections \ref{sec:thm:1}, \ref{sec:thm:2} and \ref{sec:thm:3} are devoted to proving Theorems \ref{thm:1}, \ref{thm:2} and \ref{thm:3}, respectively. Finally, we conclude in Section \ref{sec:conclude} with a discussion of some open problems.

%%%%%%%%%%%%%%%%%%%%
\section{Preliminaries}
First, we will introduce some notations that will be used throughout the paper. Later, we will mainly discuss some useful properties of binomial coefficients and some trivial cases of the problem.
\label{sec2}
\subsection{The number of monochromatic edges of a $2$-coloring of balanced complete $k$-partite $r$-uniform hypergraphs}
Let $H$ be a  balanced complete $k$-partite $(r+1)$-uniform hypergraph with $n$ vertices in each class. We consider $(r+1)$-uniform instead of $r$-uniform hypergraph for simple calculation. Let $c$ be a coloring of $H$ with $x_i$ red vertices in the $i^{th}$ class and let $X = x_1+x_2+\cdots +x_k$. Let $M(H,c)$ be the number of monochromatic edges of $H$ with coloring $c$. Then,
\begin{align*}
M(H,c) = \left[ { X \choose r+1}-\sum^{k}_{i=1}{x_i \choose r+1} \right] + \left[ { kn-X \choose r+1}-\sum^{k}_{i=1}{n-x_i \choose r+1} \right].
\end{align*}

This function is the main method to count the number of monochromatic edges. However, this function alone is not enough for comparing the numbers of monochromatic edges of all colorings. Let $\bigtriangleup_iM(H,c)$ be the change in the number of monochromatic edges when a blue vertex in the $i^{th}$ class is recolored (if possible). The change is equal to the difference between the number of monochromatic edges containing the vertex that will be recolored, before and after the recoloring. Then,
\begin{align*}
\bigtriangleup_iM(H,c)= \left[ {X \choose r} - {x_i \choose r}\right] - \left[ {kn-X-1 \choose r} - {n-x_i-1 \choose r}\right].
\end{align*}
We sometimes simply write $\bigtriangleup M(H,c)$ instead of $\bigtriangleup_iM(H,c)$ if the class of the color-changing vertex is clear.

\subsection{The number of monochromatic edges of a $2$-coloring of unbalanced complete tripartite $3$-uniform hypergraphs}
Let $H$ be an unbalanced complete tripartite $3$-uniform hypergraph with the numbers of vertices of the first, second and third classes equal to $n_1 \leqslant n_2 \leqslant n_3$, respectively, and let $N= n_1+n_2+n_3$. Let $c$ be the coloring of $H$ with the number of red vertices of the first, second and third classes equal to $x_1,x_2$ and $x_3$, respectively, and let $X=x_1+x_2+x_3$. Then, \begin{align*}
M(H,c)= \left[ { X \choose 3}-\sum^{3}_{i=1}{x_i \choose 3} \right] + \left[ { N-X \choose 3}-\sum^{3}_{i=1}{n_i-x_i \choose 3} \right],
\end{align*} and
\begin{align*}
\bigtriangleup_iM(H,c)= \left[ {X \choose 2} - {x_i \choose 2}\right] - \left[ {N-X-1 \choose 2} - {n_i-x_i-1 \choose 2}\right].
\end{align*}
\subsection{The number of monochromatic edges of a $3$-coloring of balanced complete $k$-partite $r$-uniform hypergraphs}
Let $H$ be a balanced complete $k$-partite $(r+1)$-uniform hypergraph with $n$ vertices in each class. Let $c$ be the $3$-coloring of $H$ with the numbers of red, blue and green vertices of the $i^{th}$ class equal to $r_i, b_i$ and $g_i$, respectively, and let $R$, $B$ and $G$ be the total numbers of red, blue and green vertices, respectively. Note that $ R= \sum^{k}_{i=1}{r_i}, B= \sum^{k}_{i=1}{b_i}$ and $ G= \sum^{k}_{i=1}{g_i}$. Moreover, $n= r_i+b_i+g_i$ for each $i = 1,2,\ldots,k$ and $kn = R+B+G$. Then, \begin{align*}
M(H,c)&= \left[ { R \choose r+1}-\sum^{k}_{i=1}{r_i \choose r+1} \right] + \left[ { B \choose r+1}-\sum^{k}_{i=1}{b_i \choose r+1} \right]+ \left[ { G \choose r+1}-\sum^{k}_{i=1}{g_i \choose r+1} \right].
\end{align*}

We write $\bigtriangleup_iM(H,c)$ for the change in the number of monochromatic edges when a blue vertex in the $i^{th}$ class is recolored to red (if possible). Then,
\begin{align*}
\bigtriangleup_iM(H,c) = \left[ {R \choose r} - {r_i \choose r}\right] - \left[ {B-1 \choose r} - {b_i-1 \choose r}\right].
\end{align*} The change can be calculated similarly for any recoloring with other color combinations.

\subsection{Properties of binomial coefficients}
We will additionally introduce some standard tools that will be applied throughout this paper. %For the proof, see for example \cite{1}
\begin{proposition} For any non-negative integers $a,b,c$ and $d$ with $c \leqslant a \leqslant b \leqslant d$, if $a+b\leqslant c+d$, then ${a \choose r} + {b \choose r} \leqslant {c \choose r} + {d \choose r}$ for any positive integer $r$. Moreover, the equality holds if and only if $(a=c$ and $b=d)$ or $d<r$.
\end{proposition}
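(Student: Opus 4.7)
The plan is to express each of the two differences $\binom{a}{r}-\binom{c}{r}$ and $\binom{d}{r}-\binom{b}{r}$ as a sum of binomial coefficients of one lower degree via the identity $\binom{n+1}{r}-\binom{n}{r}=\binom{n}{r-1}$, and then compare the two sums termwise. The hypotheses $c\leqslant a\leqslant b\leqslant d$ and $a+b\leqslant c+d$ translate into the left sum being no longer than the right sum, and every term on the left being dominated by every term on the right.

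Concretely, rewriting the target as $\binom{a}{r}-\binom{c}{r}\leqslant\binom{d}{r}-\binom{b}{r}$ and telescoping each side yields
$$\binom{a}{r}-\binom{c}{r}=\sum_{j=c}^{a-1}\binom{j}{r-1}\qquad\text{and}\qquad\binom{d}{r}-\binom{b}{r}=\sum_{j=b}^{d-1}\binom{j}{r-1},$$
with empty sums interpreted as $0$. The left sum has $a-c$ terms and the right sum has $d-b\geqslant a-c$ terms, where the last inequality is just $a+b\leqslant c+d$. I would then assemble the three-step chain
$$\sum_{j=c}^{a-1}\binom{j}{r-1}\leqslant(a-c)\binom{a-1}{r-1}\leqslant(d-b)\binom{b}{r-1}\leqslant\sum_{j=b}^{d-1}\binom{j}{r-1},$$
in which the outer bounds use the monotonicity of $j\mapsto\binom{j}{r-1}$ on the intervals $[c,a-1]$ and $[b,d-1]$, and the middle bound combines $a-c\leqslant d-b$ with $\binom{a-1}{r-1}\leqslant\binom{b}{r-1}$ (from $a-1<a\leqslant b$). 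The chain is the whole inequality.

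For the equality clause, the case $d<r$ is immediate because every binomial coefficient in the statement vanishes. When $d\geqslant r$, I would argue directly from the assumed equality $\binom{a}{r}+\binom{b}{r}=\binom{c}{r}+\binom{d}{r}$ rather than attempt to trace equality through the chain. If $a=c$, the identity collapses to $\binom{b}{r}=\binom{d}{r}$, which with $b\leqslant d$, $d\geqslant r$, and strict monotonicity of $\binom{\cdot}{r}$ on $[r,\infty)$ forces $b=d$. If $a>c$, I would derive a contradiction in two subcases: when $a<r$, both $\binom{a}{r}$ and $\binom{c}{r}$ vanish, so the identity again gives $\binom{b}{r}=\binom{d}{r}$, hence $b=d$, and then $a+b\leqslant c+d=c+b$ yields $a\leqslant c$, contradicting $a>c$; when $a\geqslant r$, a short case split on whether $a=c+1$ (in which case the second chain inequality is strict because $\binom{b}{r-1}>\binom{a-1}{r-1}$ by strict monotonicity on $[r-1,\infty)$ together with $b\geqslant a$) or $a\geqslant c+2$ (in which case the first chain inequality is strict because $\binom{c}{r-1}<\binom{a-1}{r-1}$) contradicts equality.

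The main obstacle is the equality analysis. The function $j\mapsto\binom{j}{r-1}$ is constantly $0$ on $\{0,1,\ldots,r-2\}$, so each of the three chain inequalities can degenerate to equality vacuously when the relevant indices lie in that range; tracing equality through the chain therefore branches into many sub-cases. Arguing directly from the original identity and exploiting strict monotonicity of $\binom{\cdot}{r}$ on $[r,\infty)$ sidesteps this bookkeeping and is the cleanest route to closing the proof.
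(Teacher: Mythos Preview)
The paper states this proposition without proof, treating it as a standard tool; there is nothing to compare against. Your telescoping argument is correct: writing each difference as a sum of $\binom{j}{r-1}$'s and bounding through the three-step chain works, and your equality analysis covers all cases. One small imprecision: in the line ``$\binom{b}{r}=\binom{d}{r}$ \ldots\ forces $b=d$'' you invoke strict monotonicity on $[r,\infty)$, but you have not yet placed $b$ in that range; the missing observation is that $b<r\leqslant d$ would give $0=\binom{b}{r}<\binom{d}{r}$, so in fact $b\geqslant r$ and the monotonicity applies. With that sentence added, the argument is complete.
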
 
Note that the inequality trivially holds when all upper indices of binomial coefficient terms are less than the lower index. This trivial condition will be found occasionally throughout our proofs of the main theorems.

\begin{proposition}
	For any non-negative integers $x_1, x_2 ,\ldots,x_n$ whose sum is constant and for any non-negative integer $r$, $ \sum_{i=1}^n{x_i\choose r}$ is smallest if and only if $x_1, x_2 ,\ldots,x_n$ are as equal as possible or $ max\{x_1,x_2,\ldots,x_n\}<r$ . 
\end{proposition}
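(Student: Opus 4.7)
The plan is to derive Proposition 2 from Proposition 1 by a standard equalizing (smoothing) argument on pairs of coordinates, together with the trivial observation that $\sum_i \binom{x_i}{r}\ge 0$ with equality precisely when every $x_i<r$.

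First I would record a one-step swap lemma: whenever two coordinates satisfy $x_i-x_j\ge 2$, replacing the pair $(x_i,x_j)$ by $(x_i-1,\,x_j+1)$ does not increase $\sum_k\binom{x_k}{r}$, and in fact strictly decreases it unless $\max(x_i,x_j)<r$. This is exactly Proposition 1 applied with $c=x_j$, $a=x_j+1$, $b=x_i-1$, $d=x_i$: the chain $c\le a\le b\le d$ uses $x_i-x_j\ge 2$, the equation $a+b=c+d$ holds by construction, and the equality clause $(a=c,\,b=d)$ is ruled out by $a\ne c$, leaving only the escape clause $d<r$.

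For the sufficiency direction, the case $\max x_i<r$ is immediate since then the sum equals $0$. If instead the $x_i$ are as equal as possible, then any sequence $(y_i)$ with the same sum can be transformed into $(x_i)$, up to permutation, by a finite sequence of the above swaps: at each step pick indices realizing the current maximum and minimum of $y$, which differ by at least $2$ as long as $(y_i)$ is not already as equal as possible. Iterating the swap lemma then yields $\sum_k\binom{y_k}{r}\ge\sum_k\binom{x_k}{r}$, so the as-equal-as-possible configuration is a minimizer.

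For the necessity direction, I would argue by contradiction: suppose $(y_i)$ is a minimizer with $\max y_i\ge r$ that is not as equal as possible, so $\max y-\min y\ge 2$. Choosing $i$ an argmax and $j$ an argmin, the swap lemma applied to this pair forces a \emph{strict} decrease, because $d=y_i=\max y\ge r$ rules out the $d<r$ escape clause of Proposition 1. This contradicts minimality. The only subtlety is to invoke the swap specifically at an argmax-argmin pair so that the strict-inequality clause of Proposition 1 fires; no further calculation is needed.
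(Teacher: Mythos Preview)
Your proposal is correct. The paper itself does not supply a proof of this proposition: it is listed among the ``standard tools'' in Section~2.4 and used freely thereafter, so there is nothing to compare against. Your smoothing argument via Proposition~4 is exactly the kind of derivation the paper implicitly has in mind, and your handling of the equality case---applying the swap at an argmax/argmin pair so that $d=\max_i y_i\ge r$ forces strict inequality---is the right way to extract the ``if and only if''. The only point you leave implicit is termination of the swap process, which follows since each swap strictly decreases $\sum_k y_k^2$; you may wish to add one sentence to that effect.
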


\begin{proposition}
	For any non-negative integers $x_1, x_2 ,\ldots,x_n$ whose sum is constant and for any non-negative integer $r$, $ \sum_{i=1}^n{x_i\choose r}$ is largest if and only if all but one $x_i$ are zeros or $\sum_{i=1}^n x_i < r$.
\end{proposition}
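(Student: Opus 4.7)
The plan is to use Proposition 1 as the main engine, merging two nonzero coordinates at a time into a single coordinate while tracking the equality conditions carefully. Set $S := \sum_{i=1}^n x_i$. The case $S < r$ is immediate: every $x_i \leq S < r$, so each $\binom{x_i}{r}$ vanishes, every configuration achieves the sum $0$, and the second clause of the stated characterization is satisfied vacuously.

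For the ``if'' direction in the nontrivial case $S \geq r$, I would first compute that the configuration in which a single $x_{i_0}$ equals $S$ and all others vanish produces the value $\binom{S}{r}$, and then argue this is an upper bound by iterating Proposition 1. Given any configuration with at least two nonzero entries $x_i \leq x_j$, apply Proposition 1 with $(a,b,c,d) = (x_i, x_j, 0, x_i + x_j)$: all four hypotheses ($c \leq a \leq b \leq d$ and $a+b \leq c+d$) hold, so the replacement $(x_i, x_j) \mapsto (0, x_i + x_j)$ does not decrease $\sum_i \binom{x_i}{r}$. Each such merge strictly reduces the number of nonzero coordinates, so after at most $n-1$ steps only one coordinate remains nonzero and the total is $\binom{S}{r}$.

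For the ``only if'' direction, still assuming $S \geq r$, I would argue by contradiction: suppose a maximizing configuration has at least two nonzero entries. If $\max_i x_i < r$, then every term vanishes and the sum is $0$, but the one-point configuration gives $\binom{S}{r} \geq 1$, so the maximizer cannot look like this. Otherwise some coordinate is at least $r$; relabel so that $x_1 = \max_i x_i \geq r$ and pick $j \geq 2$ with $x_j > 0$. Applying Proposition 1 to $(a,b,c,d) = (x_j, x_1, 0, x_1 + x_j)$, both branches of the equality clause fail: $x_j \neq 0$ rules out $(a=c,\,b=d)$, and $x_1 + x_j \geq x_1 \geq r$ rules out $d < r$. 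Hence the merge strictly increases the sum, contradicting maximality.

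The main subtlety, and the only place where the argument needs care, is precisely this equality analysis: a naive merge between two small nonzero coordinates with $x_i + x_j < r$ would yield equality rather than strict improvement, which is why in the ``only if'' step one must first locate a coordinate of size at least $r$ and absorb the other nonzero entries into it rather than merging arbitrary pairs.
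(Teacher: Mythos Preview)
Your argument is correct. The paper itself does not supply a proof of this proposition; it is listed in Section~2.4 among several ``standard tools'' (Propositions~4--6) and stated without justification, so there is no proof in the paper to compare against. Your reduction to the two-variable inequality (what the paper labels Proposition~4, your ``Proposition~1'') via repeated merging is the natural derivation, and your treatment of the equality case---first locating a coordinate of size at least $r$ before absorbing the remaining nonzero entries into it, so that the strict-inequality clause of Proposition~4 is triggered---is precisely the care the ``only if'' direction requires.
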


Proposition $4$ is the main tool to compare binomial coefficients while Propositions $5$ and $6$ are generalizations of Proposition $4$ which we will apply to prove some trivial cases of the problems in the next subsections.

\subsection{Colorings of hypergraphs with the size of each class fewer than the size of an edge}
In our theorems, we assume that the size of each class must be at least the size of an edge, otherwise, an edge cannot be contained in a class and our hypergraphs are just complete hypergraphs. In this subsection, we will note that the problem is trivial when  $n<r$ and determine a coloring that have the minimum number of monochromatic edges. Let $H$ be a complete $r$-uniform hypergraphs $H$ and let $c$ be a coloring of $H$. Then,
\begin{align*}
M(H,c)={R \choose r} + {B \choose r}
\end{align*} if $c$ is a $2$-coloring with $R$ red and $B$ blue vertices, and
\begin{align*}
M(H,c)={R \choose r} + {B \choose r}+{G \choose r}
\end{align*} if $c$ is a $3$-coloring with $R$ red, $B$ blue and $G$ green vertices.

By Proposition $5$, $M(H,c)$ is smallest if $R$, $B$ (and $G$) are as equal as possible. Hence, a coloring such that the numbers of vertices of each color are as equal as possible has the minimum number of monochromatic edges.

\subsection{Colorings of hypergraphs with the number of classes fewer than or divisible by the number of colors}
In this subsection, we will consider colorings that the number of classes is fewer than or divisible by the number of colors and determine the colorings with the minimum number of monochromatic edges. We consider not only the colorings with $2$ or $3$ colors, but also $m$-colorings with $m$ greater than $3$.

First, we consider the hypergraphs with the number of classes fewer than the number of colors. There are no monochromatic edges of a fixed color only when all vertices of that color are contained in at most one class, or the number of vertices of that color is fewer than $r$. Hence, the colorings such that each color appears in at most one class, or appears on fewer than $r$ vertices, are the only colorings with no monochromatic edges. Such colorings exist when the number of classes is fewer than the number of colors.

The determination of the minimum coloring of the remaining case is straightforward from the Propositions $5$ and $6$.
\begin{proposition}
	Let $n \geqslant r$ and let $k$ be divisible by $m$. The $m$-coloring minimizing the number of monochromatic edges of a balanced complete $k$-partite $r$-uniform hypergraph with $n$ vertices in each class is the coloring with equal numbers of vertices of each color and no polychromatic class.
\end{proposition}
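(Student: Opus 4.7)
The plan is to split $M(H,c)$ color by color and bound each piece independently using Propositions~$5$ and~$6$. For any $m$-coloring $c$, let $s_t$ denote the total number of color-$t$ vertices and $s_{t,i}=|V_t\cap V_i|$ the number of color-$t$ vertices in class $V_i$. Counting color-$t$ monochromatic edges as all $r$-subsets of color-$t$ vertices minus those contained in a single class gives
\[
M(H,c)\;=\;\sum_{t=1}^{m}\binom{s_t}{r}\;-\;\sum_{i=1}^{k}\sum_{t=1}^{m}\binom{s_{t,i}}{r}.
\]

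Next I would bound the two sums independently. For the first, $\sum_t s_t = kn$ and $m\mid k$, so Proposition~$5$ yields $\sum_t\binom{s_t}{r}\geqslant m\binom{kn/m}{r}$, with equality iff every $s_t=kn/m$. For the second, fix a class $i$: since $\sum_t s_{t,i}=n\geqslant r$, Proposition~$6$ gives $\sum_t\binom{s_{t,i}}{r}\leqslant\binom{n}{r}$, with equality iff $V_i$ is monochromatic; summing over $i$ produces $\sum_{i,t}\binom{s_{t,i}}{r}\leqslant k\binom{n}{r}$. Combining these bounds yields
\[
M(H,c)\;\geqslant\; m\binom{kn/m}{r}\;-\;k\binom{n}{r}.
\]

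I would then exhibit that the proposed coloring $c^{*}$ realises both extremes simultaneously. Assigning $k/m$ whole classes to each color makes every $s_t$ equal to $kn/m$ and every class monochromatic, so $M(H,c^{*})$ attains the lower bound and the coloring is optimal. Tracking the equality cases of the two propositions (every $s_t=kn/m$ together with every class monochromatic) also gives uniqueness up to a permutation of colors, since these conditions force each color to occupy exactly $k/m$ whole classes.

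The main point to be careful about is that the two extremal conditions, balanced color totals and monochromatic classes, must be compatible. This is exactly where the hypothesis $m\mid k$ is used: $k/m$ is a positive integer, so each color can indeed be given an integer number of whole classes. Beyond that, the argument is pure bookkeeping on top of Propositions~$5$ and~$6$, so I do not anticipate any serious obstacle.
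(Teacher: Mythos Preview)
Your proof is correct and follows essentially the same approach as the paper: both split $M(H,c)$ into the color-total term and the per-class term, apply Proposition~5 to the former and Proposition~6 to the latter, and then observe that the coloring $c^{*}$ attains both bounds simultaneously. The only cosmetic difference is that the paper writes the two bounds in a single chain of inequalities, and explicitly notes that the alternative equality case $\max_t s_t<r$ from Proposition~5 is ruled out by $kn/m\geqslant n\geqslant r$; you may want to add one clause to that effect when tracking uniqueness.
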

\begin{proof}
	
	Suppose that $ n \geqslant r$. Let $H$ be a balanced complete $k$-partite $r$-uniform hypergraph with $n$ vertices in each class and let $c$ be an $m$-coloring of $H$ such that $k$ is divisible by $m$. Let $x_{li}$ be the number of vertices with the $l^{th}$ color in the $i^{th}$ class and let $X_l$ be the total number of vertices with the $l^{th}$ color. Then,
	\begin{align*}
	M(H,c)&= \sum^{m}_{l=1} \left[ {X_l \choose r} - \sum^{k}_{i=1} {x_{li} \choose r} \right].
	\end{align*} We will show that the coloring $c^*$ with equal numbers of vertices of each color and no polychromatic class is the minimum coloring by directly comparing the numbers of monochromatic edges of $c$ and $c^*$. By Propositions $5$ and $6$,
	\begin{align*}
	M(H,c)&=\sum^{m}_{l=1} \left[ {X_l \choose r} - \sum^{k}_{i=1} {x_{li} \choose r} \right]\\
	&\geqslant m{\frac{1}{m} \sum_{l=1}^m X_l \choose r} - \sum^{k}_{i=1} {\sum^{m}_{l=1} x_{li} \choose r }\\
	&=m {\frac{kn}{m} \choose r} - \sum^{k}_{i=1} {n \choose r}=M(H,c^*).
	\end{align*}
	
	The equality holds only when ($X_i= \frac{kn}{m}$ for each $i$ and each class is monochromatic) or  $n<r$, but the latter is impossible. Hence, $c^*$ is the unique coloring with the minimum number of monochromatic edges.
\end{proof}

The proof in this subsection is a straightforward comparison due to the simplicity of color distribution. However, in other general cases, they are much more complicated.
%%%%%%%%%%%%%%%%%%%%
\section{Proof of Theorem~\ref{thm:1}}
\label{sec:thm:1}

\begin{proof}[Proof of Theorem~\ref{thm:1}] Let $H$ be a  balanced complete $k$-partite $(r+1)$-uniform hypergraph with $n \geqslant r+1$ vertices in each class where $k \geqslant 2$ and $r \geqslant 2$. Let $c$ be a coloring of $H$ with $x_i$ red vertices in the $i^{th}$ class and let $X = x_1+x_2+\cdots+x_k.$ We may assume that $X\leqslant \lfloor \frac{kn}{2} \rfloor$ , otherwise, we relabel the names of the colors. Note that if $k$ is even, the proof is completed by Proposition $7$. However, we will not need to assume that $k$ is odd in the following proof.
	
	We will calculate $M(H,c)$ in a new manner by summing each change when a blue vertex is recolored into red one by one starting from the all blue hypergraph until we reach $c$. Let $c_0$ be the all blue coloring and let $c_j$ be the coloring after the $j^{th}$ change of $H$. Thus, 
	\begin{align*}
	M(H,c) = M(H,c_0) + \sum^{X-1}_{j=0} \bigtriangleup M(H,c_j)={kn \choose r+1}-k{n \choose r+1} +\sum^{X-1}_{j=0} \bigtriangleup M(H,c_j).
	\end{align*}
	
	 We suppose that the vertices in the first class of the all blue hypergraph will be recolored first to match the first class of $c$ and then continue to the next class. Note that $c_j$ has $j$ red vertices. Let the $i^{th}$ class be the class containing the blue vertex that will be recolored and $x$ be the number of red vertices in that class. Then, from Section $2.1$, 
	\begin{align*}
	\bigtriangleup M(H,c_j) &= \left[ {j \choose r} - {x \choose r}\right] - \left[ {kn-j-1 \choose r} - {n-x-1 \choose r}\right]\\
	&= \left[ {j \choose r} - {kn-j-1 \choose r}\right] - \left[ {x \choose r} - {n-x-1 \choose r}\right].
	\end{align*}
	
	Note that while each vertex in the changing class is being recolored, the term $x$ ascends from $0$ to $x_{i}-1$. Thus, \begin{align*}
	M(H,c) &={kn \choose r+1}-k{n \choose r+1} + \sum^{X-1}_{j=0} \bigtriangleup M(H,c_j)\\
	&= {kn \choose r+1}-k{n \choose r+1} + \sum^{X-1}_{j=0}\left[ {j \choose r} - {kn-j-1 \choose r}\right]- \sum^{k}_{i=1}\sum^{x_i-1}_{x=0}\left[ {x \choose r} - {n-x-1 \choose r}\right].
	\end{align*}
	
	In this way, if we consider only the colorings with $X$ red vertices, then the terms \[ {kn \choose r+1}-k{n \choose r+1} + \sum^{X-1}_{j=0}\left[ {j \choose r} - {kn-j-1 \choose r}\right] \] in the function $M(H,c)$ are constant. Only the term
	
	\[\sum^{k}_{i=1}\sum^{x_i-1}_{x=0}\left[ {x \choose r} - {n-x-1 \choose r}\right]\] is distinct and we denote this term by $S(x_1,x_2,\ldots,x_k)$. Hence, the coloring with maximum value of $S(x_1,x_2,\ldots,x_k)$ will have the minimum number of monochromatic edges.
	
	\begin{claim*}
		Among the colorings with a constant total number $X$ of red vertices, the coloring $c^*_X$ with the minimum number of polychromatic classes has the minimum number of monochromatic edges. Moreover, the minimum coloring is unique up to a permutation of classes.
	\end{claim*}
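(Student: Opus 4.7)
The plan is to recast the task as maximizing the function $S(x_1, \ldots, x_k) = \sum_{i=1}^{k} f(x_i)$ subject to $\sum_i x_i = X$ and $0 \leqslant x_i \leqslant n$, where I define
\[
f(t) := \sum_{x=0}^{t-1}\left[\binom{x}{r} - \binom{n-x-1}{r}\right], \qquad 0 \leqslant t \leqslant n.
\]
Before attacking this, I would record three properties of $f$: first, $f(0) = f(n) = 0$, immediate from the hockey-stick identity; second, the palindromic identity $f(n - t) = f(t)$, proved by the substitution $y = n - x - 1$; and third, convexity, since the second difference
\[
\bigl[f(t+2) - f(t+1)\bigr] - \bigl[f(t+1) - f(t)\bigr] = \binom{t}{r-1} + \binom{n-t-2}{r-1}
\]
is nonnegative. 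A brief convexity-plus-endpoints argument using $f(1) = -\binom{n-1}{r} < 0$ (where $n \geqslant r+1$ is used) then gives $f(t) < 0$ for all $0 < t < n$.

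The engine of the proof is a single \emph{merging inequality}: for integers $a, b \geqslant 1$ with $a + b \leqslant n$,
\[
f(a + b) > f(a) + f(b).
\]
Setting $\delta(t) := f(t+1) - f(t)$ and telescoping gives
\[
f(a+b) - f(a) - f(b) = \sum_{s=0}^{b-1}\bigl[\delta(a+s) - \delta(s)\bigr],
\]
where each summand is nonnegative by convexity and the $s = 0$ term is strictly positive since $\delta(a) - \delta(0) = \binom{a}{r} + \binom{n-1}{r} - \binom{n-a-1}{r} > 0$ for every $a \geqslant 1$ when $n \geqslant r+1$.

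With the merging inequality in hand, the main argument is short. Suppose, for contradiction, that a maximizer $c$ of $S$ has at least two polychromatic classes, and pick indices $i \neq j$ with $0 < x_i \leqslant x_j < n$. If $x_i + x_j \leqslant n$, I would replace $(x_i, x_j)$ by $(0, x_i + x_j)$: the change in $S$ is $f(x_i + x_j) - f(x_i) - f(x_j) > 0$, contradicting maximality. If instead $x_i + x_j > n$, I would use the palindromic identity to reduce to the previous case: writing $a = n - x_j$ and $b = n - x_i$, we have $a, b \geqslant 1$, $a + b < n$, $f(x_i) + f(x_j) = f(a) + f(b)$, and $f(x_i + x_j - n) = f(2n - x_i - x_j) = f(a+b)$, so the merging inequality says that replacing $(x_i, x_j)$ by $(x_i + x_j - n, n)$ strictly increases $S$, again a contradiction. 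Hence every maximizer has at most one polychromatic class, which pins it down up to permutation of classes as $c^*_X$.

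The main obstacle I expect is not in the structure of the argument but in the careful bookkeeping of the strict inequality in the merging step so that it holds uniformly for all $n \geqslant r+1$, including the small cases such as $n = r+1$ where $f$ has long linear segments and non-strict convexity is the generic behaviour; once the strictness in the $s = 0$ term is pinned down via $\binom{n-1}{r} > \binom{n-a-1}{r}$, the remainder is a one-step smoothing.
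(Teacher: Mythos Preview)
Your argument is correct. It is driven by the same underlying convexity as the paper's proof, but the organization is genuinely different. The paper takes two polychromatic classes $l,m$ with $x_l\geqslant x_m$, performs a \emph{single-vertex} swap $x_l\mapsto x_l+1$, $x_m\mapsto x_m-1$, and checks directly that
\[
S(\ldots,x_l+1,\ldots,x_m-1,\ldots)-S(\ldots,x_l,\ldots,x_m,\ldots)=\Bigl[\tbinom{x_l}{r}-\tbinom{x_m-1}{r}\Bigr]+\Bigl[\tbinom{n-x_m}{r}-\tbinom{n-x_l-1}{r}\Bigr]\geqslant 0;
\]
it then iterates and argues that strictness must occur at the step where one of the two classes is about to become monochromatic (since then one upper index equals $n-1\geqslant r$). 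You instead isolate the one-variable function $f$, record its convexity and the palindromic identity $f(n-t)=f(t)$, and prove a strict \emph{merging} inequality $f(a+b)>f(a)+f(b)$ once; a single merge then finishes each pair, with the case $x_i+x_j>n$ reduced to $x_i+x_j\leqslant n$ via the symmetry of $f$. What your approach buys is that strictness is established in one place (the $s=0$ term of the telescoped sum) rather than tracked through an iteration, and the symmetry $f(n-t)=f(t)$ is a pleasant conceptual bonus; what the paper's one-step swap with the ordering $x_l\geqslant x_m$ buys is that it never needs to split on whether $x_i+x_j$ exceeds $n$. Neither route is essentially stronger: both are smoothing arguments resting on the nonnegativity of $\delta(t+1)-\delta(t)=\tbinom{t}{r-1}+\tbinom{n-t-2}{r-1}$.
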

	\begin{proof} Consider a coloring with the number of red vertices in the $i^{th}$ class  equal to $x_i$ where $x_1 +x_2+\cdots+x_k = X$ and $x_1 \geqslant x_2 \geqslant \cdots\geqslant x_k$. Suppose that the coloring is not $c^*_X$. Therefore, there exist classes $l < m$ such that $x_l \neq n$ and $x_m \neq 0$. Next, we will compare the terms \[S(x_1, \ldots,x_l,\ldots,x_m,\ldots,x_k)\] and \[S(x_1, \ldots,x_l+1,\ldots,x_m-1,\ldots,x_k).\]
	which corresponds to swapping a red vertex from the $m^{th}$ class with a blue vertex from the $l^{th}$ class. Thus, since $x_l \geqslant x_m$,
	\begin{align*}S(x_1&, \ldots,x_l+1,\ldots,x_m-1,\ldots,x_k) - S(x_1, \ldots,x_l,\ldots,x_m,\ldots,x_k)\\ 
	&=\sum_{x=0}^{x_l + 1 -1}\left[ {x \choose r} - {n-x-1 \choose r}\right] + \sum_{x=0}^{x_m -1 -1}\left[ {x \choose r} - {n-x-1 \choose r}\right]\\
	&\;\;\;\;-\sum_{x=0}^{x_l-1}\left[ {x \choose r} - {n-x-1 \choose r}\right]-\sum_{x=0}^{x_m-1}\left[ {x \choose r} - {n-x-1 \choose r}\right]\\
	&= \left[{x_l \choose r} -{x_m-1 \choose r} \right] + \left[{n-x_m \choose r}-{n-x_l-1\choose r}\right] \geqslant 0.
	\end{align*}
	
	The equality holds only when all upper indices of the binomial coefficient terms are less than $r$. The swapping resulted in fewer or equal monochromatic edges. We will continue swapping as long as possible to reduce the number of polychromatic classes. The inequality is strict at some point since either $x_l$ will eventually equal to $n-1$ in which case $x_l=n-1 \geqslant r$, or $x_m$ will eventually equal to $1$ in which case $n-x_m=n-1 \geqslant r$. This implies that the original coloring has strictly more monochromatic edges than some coloring. Hence, $c^*_X$ is the unique coloring with the minimum number of monochromatic edges among those with $X$ red vertices.
	\end{proof}
	
	We have determined the minimum coloring for each value of $X$. Next, we will make comparisons between colorings with different values of $X$. We will show that $M(H,c^*_X) > M(H, c^*_{X+1})$ for all $X \leqslant \lfloor\frac{kn}{2}\rfloor -1$. Let $c^*_X$ be the minimum coloring with $X \leqslant \lfloor\frac{kn}{2}\rfloor-1$ red vertices. Suppose that the polychromatic class of $c^*_X$ is the $i^{th}$ class, but if $c^*_X$ has no polychromatic class, suppose the $i^{th}$ class is an all blue class. Observe that
	\begin{align*}
	M(H,c^*_{X+1})-M(H,c^*_X) &=\bigtriangleup_i M(H,c^*_X)\\
	&= \left[ {X \choose r} - {x_i \choose r}\right] - \left[ {kn-X-1 \choose r} - {n-x_i-1 \choose r}\right]\\
	&= \left[ {X \choose r} + {n-x_i-1 \choose r}\right] - \left[ {kn-X-1 \choose r} + {x_i \choose r}\right].
	\end{align*}
	The following proof will be divided into two cases according to the value of $x_i
	$.
	
	\textit{Case 1}: $\frac{n}{2} \leqslant x_i < n $.
	
	Thus, \begin{align*} X- (kn-X-1)=2\left(X+\frac{1}{2} -\frac{kn}{2}\right) \leqslant 2\left(\left\lfloor\frac{kn}{2}\right\rfloor-\frac{kn}{2}-\frac{1}{2}\right)< 0,
	\end{align*}and 
	\begin{align*}(n-x_i-1)-x_i = 2\left(\frac{n}{2}-x_i-\frac{1}{2}\right)< 0.
	\end{align*}
	Hence, $\bigtriangleup_i M(H,c^*_X) \leqslant 0$ but $\bigtriangleup_i M(H,c^*_X) \neq 0$ since one of the upper indices is at least $r$. Indeed, $kn-X-1 \geqslant \left\lceil\frac{kn}{2}\right\rceil \geqslant n > r$ because $k \geqslant 2$.
		
	\textit{Case 2}: $0 \leqslant x_i < \frac{n}{2}$.
	
	We will show that $\bigtriangleup_i M(H,c^*_X) < 0$ by Proposition 4. As in Case 1,  \[X- (kn-X-1)<0.\]
	Moreover, \begin{align*}(n-x_i-1)-x_i = 2\left(\frac{n}{2}-x_i-\frac{1}{2}\right)\geqslant 0.
	\end{align*} Suppose that there are $k^*$ red classes in $c^*_X$, i.e., $X = k^*n + x_i$. Since $X\leqslant \lfloor\frac{kn}{2}\rfloor-1$, we have $k^* \leqslant \frac{k-1}{2}$. Thus, \begin{align*}
	(X +n-x_i-1)-(x_i+kn-X-1)=2\left(k^*n - \frac{k-1}{2}n\right)\leqslant 0.
	\end{align*}\noindent Similarly, $kn-X-1>r$. Hence, by Proposition 4, $\bigtriangleup_i M(H,c^*_X) < 0.$
	
	By the two cases, $c^*_X$ contains strictly more monochromatic edges than $c^*_{X+1}$ does, given that $X \leqslant \lfloor\frac{kn}{2}\rfloor -1$. Consequently, we can conclude that the unique coloring with the minimum number of monochromatic edges among all minimum colorings $c^*_X$ is $c^*_{\lfloor\frac{kn}{2}\rfloor}$.
	
	Together with the claim, $c^*_{\lfloor\frac{kn}{2}\rfloor}$ is the unique coloring with the minimum number of monochromatic edges.\end{proof}

Note that, in the claim, we determine $c^*_X$ by means of determining the coloring with maximum $S(x_1,x_2,\ldots,x_k)$. On the contrary, we could determine the coloring with a constant number $X$ of red vertices that has the maximum number of monochromatic edges by showing conversely that the coloring with minimum $S(x_1,x_2,\ldots,x_k)$ is the coloring such that $x_1$, $x_2, \ldots, x_k$ are as equal as possible. However, this is out of our topic.
%%%%%%%%%%%%%%%%%%%%
\section{Proof of Theorem~\ref{thm:2}}
\label{sec:thm:2}
 \begin{proof}[Proof of Theorem~\ref{thm:2}]
	Let $H$ be an unbalanced complete tripartite $3$-uniform hypergraph with the numbers of vertices of the first, second and third classes $ n_1 \leqslant n_2 \leqslant n_3$ and let $N=n_1+n_2+n_3$. Let $c$ be the coloring of $H$ with the number of red vertices of the first, second and third classes equal to $x_1,x_2$ and $x_3$, respectively, and let $X=x_1+x_2+x_3$.
	
	We divide the proof into two subsections according to the size of the smallest class. In the first subsection, a similar idea as in the proof of Theorem~\ref{thm:1} is extended to determine the minimum coloring when the number of vertices of each class is at least $3$. The second subsection is mainly about hypergraphs with some small classes.
	\subsection{Hypergraphs with $n_1 \geqslant 3$}
	Assume that $n_1 \geqslant 3$. Let $\bigtriangleup_{i{i'}} M(H,c)$ be the change in the number of monochromatic edges if a blue vertex in the $i^{th}$ class is recolored into red and a red vertex in the ${i'}^{th}$ class is recolored into blue. The process will be called \textit{swapping} which results in a new coloring, say $c'$. We compute $\bigtriangleup_{i{i'}} M(H,c)$ by comparing the number of monochromatic edges containing those vertices undergone swapping before and after swapping process. Thus, %As a result of the process, the number of red vertices in the $i^{th}$ class increases by $1$ and the number of red vertices in the ${i'}^{th}$ class decreases by $1$ while the total number of red vertices remains the same. In other words, the coloring $c'$ has $x_i + 1$ and $x_{i'}-1$ red vertices in the $i^{th}$ and ${i'}^{th}$ classes, respectively. 
	\begin{align*}
	\bigtriangleup_{i{i'}}M(H,c)&= \left[ {x_1+x_2+x_3-1 \choose 2}-{x_i \choose 2} + {N-x_1-x_2-x_3-1 \choose 2} - {n_{i'}-x_{i'}\choose 2} \right]\\
	&\;\;\;\;- \left[ {x_1+x_2+x_3-1 \choose 2}-{x_{i'}-1 \choose 2} + {N-x_1-x_2-x_3-1 \choose 2} - {n_i-x_i-1\choose 2} \right]\\
	&= \left[ {x_{i'}-1 \choose 2} + {n_i-x_i-1\choose 2} \right] - \left[ {x_i \choose 2} +  {n_{i'}-x_{i'}\choose 2} \right].
	\end{align*} A \textit{successful swapping} is a swapping in such a way that the number of monochromatic edges is reduced, i.e., $\bigtriangleup_{i{i'}} M(H,c) < 0$.
	\begin{lemma}
		If $\bigtriangleup_{i{i'}} M(H,c) \leqslant 0$, then $\bigtriangleup_{i{i'}} M(H,c') < 0$.
	\end{lemma}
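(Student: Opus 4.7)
The plan is to prove the lemma by a direct second-difference computation: regard the number of monochromatic edges as a function of the swap parameter, and show that this function is strictly concave in that parameter so that if one swap step is nonpositive, the next must be strictly smaller (hence strictly negative). Concretely, I will show that
\[
\bigtriangleup_{ii'}M(H,c') - \bigtriangleup_{ii'}M(H,c) = 4 - n_i - n_{i'},
\]
which is at most $-2$ because the hypothesis $n_1\geqslant 3$ gives $n_i,n_{i'}\geqslant 3$. Combined with $\bigtriangleup_{ii'}M(H,c)\leqslant 0$, this immediately yields $\bigtriangleup_{ii'}M(H,c')\leqslant -2<0$.

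The key steps, in order, are as follows. First, I would read off the formula for $\bigtriangleup_{ii'}M$ displayed just before the lemma, and substitute the coordinates of $c'$ (namely $x_i\mapsto x_i+1$ and $x_{i'}\mapsto x_{i'}-1$, with the other coordinates unchanged) to obtain
\[
\bigtriangleup_{ii'}M(H,c') = \binom{x_{i'}-2}{2} + \binom{n_i-x_i-2}{2} - \binom{x_i+1}{2} - \binom{n_{i'}-x_{i'}+1}{2}.
\]
Second, I would subtract the original expression for $\bigtriangleup_{ii'}M(H,c)$ and group the four pairs of consecutive binomial coefficients. Third, I would apply the elementary identity $\binom{a}{2}-\binom{a-1}{2}=a-1$ to each pair, obtaining $2-x_{i'}$, $2+x_i-n_i$, $-x_i$, and $-(n_{i'}-x_{i'})$ respectively. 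Finally, I would add these four quantities and watch the $x_i$'s and $x_{i'}$'s cancel, leaving the promised constant $4-n_i-n_{i'}$.

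The only mild subtlety, which I regard as the main (though small) obstacle, is checking that the binomial identity $\binom{a}{2}-\binom{a-1}{2}=a-1$ is being applied with $a\geqslant 1$, since the equality fails at $a=0$ under the combinatorial convention $\binom{-1}{2}=0$. This is automatic in the present setting: for the lemma's conclusion to carry nontrivial content the swap must actually be performable in $c'$, so $x_i+1\leqslant n_i-1$ and $x_{i'}-1\geqslant 1$, which forces all four relevant values of $a$ to be at least $1$. Once this check is in place, the inequality $4-n_i-n_{i'}\leqslant -2$ delivers the strict improvement and the proof is complete. Conceptually, the lemma is just the statement that the edge count is a strictly concave function along any single red/blue swap direction, with the modulus of concavity controlled by $n_i+n_{i'}-4$.
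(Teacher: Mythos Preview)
Your argument is correct and cleaner than the paper's. The paper does not compute the second difference exactly; instead it argues term by term that each of the four binomial coefficients in $\bigtriangleup_{ii'}M(H,c')$ moves in the right direction relative to its counterpart in $\bigtriangleup_{ii'}M(H,c)$, giving $\bigtriangleup_{ii'}M(H,c')\leqslant\bigtriangleup_{ii'}M(H,c)\leqslant 0$, and then runs a separate case analysis to rule out equality (concluding that equality would force $n_i<3$ or $n_{i'}<3$). Your approach collapses all of this into the single identity $\bigtriangleup_{ii'}M(H,c')-\bigtriangleup_{ii'}M(H,c)=4-n_i-n_{i'}\leqslant -2$, which is both shorter and more informative: it shows the swap is strictly concave with a uniform gap, so one never needs the hypothesis $\bigtriangleup_{ii'}M(H,c)\leqslant 0$ at all to get strict decrease on the second step. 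The paper's term-by-term argument has the minor advantage that it is insensitive to the combinatorial convention $\binom{-1}{2}=0$ and so proves the formal inequality even when a further swap from $c'$ is impossible; your caveat restricting to the case where the swap is performable in $c'$ is entirely appropriate, since that is the only case in which the quantity $\bigtriangleup_{ii'}M(H,c')$ has meaning and the only case ever used downstream.
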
\begin{proof}
	 Observe that
\begin{align*}
	\bigtriangleup_{i{i'}}M(H,c')&= \left[ {(x_{i'}-1)-1 \choose 2} + {n_i-(x_i+1)-1\choose 2} \right] - \left[ {x_i+1 \choose 2} +  {n_{i'}-(x_{i'}-1)\choose 2} \right]\\
	&\leqslant \left[ {x_{i'}-1 \choose 2} + {n_i-x_i-1\choose 2} \right] - \left[ {x_i \choose 2} +  {n_{i'}-x_{i'}\choose 2} \right]=\bigtriangleup_{i{i'}} M(H,c) \leqslant 0.
	\end{align*}
	The equality holds only when ($x_{i'}-1<2$ and $n_i-x_i-1<2$) and ($x_i+1<2$ and $n_{i'}-x_{i'}+1<2$) and $\bigtriangleup_{i{i'}} M(H,c)=0$. Suppose that $\bigtriangleup_{i{i'}}M(H,c')=0$. We have that $x_i=0$ and $x_{i'}<3$. Thus, $n_i=n_i-x_i<3$ and $n_{i'}-2 \leqslant n_{i'} -x_{i'}<1 $, i.e., $n_{i'}<3$ which contradicts with $3 \leqslant n_1\leqslant n_2\leqslant n_3$. 
\end{proof}

Lemma 8 means that if a swapping can be done without increasing the number of monochromatic edges, another swapping in the same direction will be successful (if there are red and blue vertices to be swapped). The process of successful swappings will terminate when one of the two classes (or both) is monochromatic.
\begin{lemma}
	If $\bigtriangleup_{i{i'}} M(H,c) \geqslant 0$, then $\bigtriangleup_{{i'}i} M(H,c) < 0$.
\end{lemma}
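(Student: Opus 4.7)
The natural approach is to exploit the near-symmetry between the two quantities in the statement: $\bigtriangleup_{i'i}M(H,c)$ is obtained from the displayed formula for $\bigtriangleup_{ii'}M(H,c)$ by simply interchanging the indices $i$ and $i'$. The plan, therefore, is to write both expressions explicitly, add them, and check that their sum is strictly negative; the implication of the lemma is then immediate, because if one of two numbers whose sum is negative is itself non-negative, the other must be strictly negative.

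Carrying this out, the sum $\bigtriangleup_{ii'}M(H,c) + \bigtriangleup_{i'i}M(H,c)$ naturally pairs up four differences of the shape $\binom{k-1}{2} - \binom{k}{2}$, one for each value $k \in \{x_i, x_{i'}, n_i - x_i, n_{i'} - x_{i'}\}$. Each such difference equals $-(k-1)$ by Pascal's rule, and when the four contributions are combined the $x_i$ and $x_{i'}$ terms cancel. The remaining constants give
\begin{align*}
\bigtriangleup_{ii'}M(H,c) + \bigtriangleup_{i'i}M(H,c) = 4 - n_i - n_{i'}.
\end{align*}
Since this subsection assumes $n_1 \geqslant 3$, we have $n_i + n_{i'} \geqslant 6$, so the sum is at most $-2$. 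Therefore, if $\bigtriangleup_{ii'}M(H,c) \geqslant 0$, then $\bigtriangleup_{i'i}M(H,c) \leqslant -2 < 0$, which is exactly the desired conclusion.

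No real obstacle is anticipated; the proof reduces to a two-line calculation once the sum is identified. The only subtlety worth flagging is that the identity $\binom{k-1}{2} - \binom{k}{2} = -(k-1)$ requires $k \geqslant 1$ to avoid negative-upper-index conventions. This is automatic here: for both $\bigtriangleup_{ii'}M(H,c)$ and $\bigtriangleup_{i'i}M(H,c)$ to be meaningful, classes $i$ and $i'$ must each contain both a red and a blue vertex, forcing $x_i, x_{i'} \geqslant 1$ and $n_i - x_i, n_{i'} - x_{i'} \geqslant 1$. Together with Lemma 8, this lemma will underpin the argument that an extremal coloring must lie in one of the canonical forms, since starting from any configuration one of the two opposite swaps is always strictly profitable.
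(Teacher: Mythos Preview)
Your proof is correct. It follows the same underlying idea as the paper (control the sum $\bigtriangleup_{ii'}M(H,c)+\bigtriangleup_{i'i}M(H,c)$), but the execution is somewhat different and in fact tidier. The paper shows the term-by-term inequality
\[
\bigtriangleup_{i'i}M(H,c)\;\leqslant\;-\bigtriangleup_{ii'}M(H,c)
\]
by using $\binom{k-1}{2}\leqslant\binom{k}{2}$ on each of the four binomials, and then has to chase the equality case (which forces $x_i,x_{i'}\leqslant 1$ and hence $n_i,n_{i'}<3$, a contradiction). You instead evaluate the sum \emph{exactly} via $\binom{k-1}{2}-\binom{k}{2}=-(k-1)$, obtaining $\bigtriangleup_{ii'}M(H,c)+\bigtriangleup_{i'i}M(H,c)=4-n_i-n_{i'}\leqslant -2$ under the standing assumption $n_1\geqslant 3$; this makes the strictness immediate without a separate equality analysis. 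Your remark that all four of $x_i,x_{i'},n_i-x_i,n_{i'}-x_{i'}$ are at least $1$ (needed for the Pascal identity under the convention $\binom{-1}{2}=0$) is justified, since both swaps must be performable for the statement to make sense.
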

\begin{proof}Observe that
	\begin{align*}
	\bigtriangleup_{{i'}i} M(H,c) &= \left[ {x_i-1 \choose 2} + {n_{i'}-x_{i'}-1\choose 2} \right] - \left[ {x_{i'} \choose 2} +  {n_i-x_i\choose 2} \right]\\
	&\leqslant \left[ {x_i \choose 2} + {n_{i'}-x_{i'}\choose 2} \right] - \left[ {x_{i'}-1 \choose 2} +  {n_i-x_i-1\choose 2} \right]\\
	&= -\bigtriangleup_{i{i'}} M(H,c) \leqslant 0.
	\end{align*}
	The equality holds only when ($x_i<2$ and $n_{i'}-x_{i'}<2$) and ($x_{i'}<2$ and $n_i-x_i<2$) and $\bigtriangleup_{i{i'}} M(H,c)=0$. Suppose that $\bigtriangleup_{{i'}i}M(H,c)=0$. We have that $x_i \leqslant 1$ and $x_{i'}\leqslant 1$. Thus, $n_i-1\leqslant =n_i-x_i<2$ and $n_{i'}-1\leqslant n_{i'} -x_{i'}<2 $, i.e., $n_i<3$ and $n_{i'}<3$ which contradicts with $3 \leqslant n_1\leqslant n_2\leqslant n_3$.
\end{proof}

Note that, for any coloring $c$, if $c$ contains two classes, the $i^{th}$ and ${i'}^{th}$, which are polychromatic, then a swapping can be done in two directions as follows.
\begin{enumerate}\item Swapping a red vertex of the $i^{th}$ class with a blue vertex of the ${i'}^{th}$ class.
	\item Swapping a blue vertex of the $i^{th}$ class with a red vertex of the ${i'}^{th}$ class.
\end{enumerate}
By Lemma 9, one of the two directions is successful. Moreover, by Lemma 8, we can continue swapping in the same direction until one of the two classes is monochromatic and get fewer monochromatic edges. Hence, the coloring with minimum number of monochromatic edges among colorings with constant number of red vertices, must have at most one polychromatic class. We will list all these forms in the following table which will be the candidates for the coloring with minimum number of monochromatic edges.
	\begin{center}
		\begin{tabular}{ |P{3cm}|P{3cm}|P{3cm}|P{3cm}|}
			\hline
			Canonical forms & $1^{st}$ Class  & $2^{nd}$ Class & $3^{rd}$ Class\\
			\hline
			$F_1$& polychromatic& blue&blue\\
			$F_2$& blue& polychromatic&blue\\
			$F_3$& blue& blue&polychromatic\\
			$F_4$& red&polychromatic&blue\\
			$F_5$& red&blue&polychromatic\\
			$F_6$& polychromatic&red&blue\\
			$F_7$& blue&red&polychromatic\\
			$F_8$& polychromatic&blue &red\\
			$F_9$& blue& polychromatic&red\\
			$F_{10}$& red& red&polychromatic\\
			$F_{11}$& red& polychromatic&red\\
			$F_{12}$& polychromatic& red&red\\
			\hline
		\end{tabular}
	\end{center}

	The first column illustrates the list of $12$ canonical forms and the remaining columns describe the colors of vertices in those classes. The terms \textit{red} and \textit{blue} mean all vertices in those classes are monochromatic of red and blue, respectively. On the other hand, \textit{polychromatic} means that this class is allowed to be polychromatic but it may be monochromatic. Note that a coloring can be considered to be in several canonical forms, for example, the all blue coloring is of the form $F_1$, $F_2$ or $F_3$.
	
	%Note that if a coloring $c$ has a class full of red vertices, we say that the number of red vertices in that class is equal to the size of that class; conversely, if a class is full of blue vertices, we say that the number of red vertices in that class is equal to $0$. For instance, a coloring $c$ is in the form $F_5$: Then, $c$ has respective numbers of red vertices in the first, second and last classes to be $x_1=n_1$, $x_2=0$ and $x_3$.
	
	%\begin{figure}[h]
	%	\centering
	%	\includegraphics[width=14cm, height = 14cm]{4}
	%	\caption{List of all canonical forms $F_1$ to $F_{12}$.}
	%	\label{fig:4}
	%\end{figure}
	
	We may assume that $X \leqslant\lfloor\frac{N}{2}\rfloor$. Consequently, both $F_{11}$ and $F_{12}$ are out of our interest since the total numbers of red vertices, which are $n_1+x_2+n_3$ and $x_1+n_2+n_3$, respectively, exceed $\lfloor\frac{N}{2}\rfloor$ as shown:
	\begin{align*}
	n_1+x_2+n_3 = \frac{n_1+n_3+2x_2}{2}+\frac{n_1+n_3}{2} > \frac{n_2}{2} +\frac{n_1+n_3}{2} \geqslant  \left\lfloor\frac{n_1+n_2+n_3}{2}\right\rfloor
	\end{align*} and \begin{align*}
	x_1+n_2+n_3 = \frac{n_2+n_3+2x_1}{2}+\frac{n_2+n_3}{2} > \frac{n_1}{2} +\frac{n_2+n_3}{2} \geqslant  \left\lfloor\frac{n_1+n_2+n_3}{2}\right\rfloor.
	\end{align*}
	
	Next, we will focus on the possibility of $F_{10}$. If $c$ has total red vertices to be $X=n_1+n_2+x_3 \leqslant \lfloor\frac{n_1+n_2+n_3}{2}\rfloor $. Then, \begin{align*}
	n_1 + n_2 \leqslant 2\left(\left\lfloor\frac{n_1+n_2+n_3}{2}\right\rfloor-\frac{n_1+n_2}{2}\right)\leqslant 2\left(\frac{n_1+n_2+n_3}{2}-\frac{n_1+n_2}{2}\right)=n_3.
	\end{align*}
	
	The necessary condition of a coloring $c$ of a hypergraph $H$ to be in the form $F_{10}$ is that $n_1+n_2 \leqslant n_3$. Note that the condition $n_1+n_2 \leqslant n_3 $ is equivalent to $n_1+n_2 \leqslant \lfloor\frac{n_1+n_2+n_3}{2}\rfloor \leqslant n_3$ and we call a hypergraph with this condition \textit{type A}. On the other hand, the condition $n_1+n_2 > n_3 $ is equivalent to $n_3\leqslant \lfloor\frac{n_1+n_2+n_3}{2}\rfloor < n_1+n_2$ and we call a hypergraph with this condition \textit{type B}. Next, we will determine the minimum coloring among those colorings with constant number $X$ of red vertices or $c^*_X$ from the candidates $F_1$ to $F_9$ and $F_{10}$ will be considered only when $H$ is a type A hypergraph.
	
	As in the proof of Theorem~\ref{thm:1}, we calculate $M(H,c)$ by summing each change when a blue vertex is recolored into red one by one starting from the all blue hypergraph until we reach $c$. Let $c_0$ be the all blue coloring and let $c_j$ be the coloring after the $j^{th}$ change of $H$. Thus,
	\begin{align*}
	M(H,c) = M(H,c_0) + \sum^{X-1}_{j=0} \bigtriangleup M(H,c_j)= {N \choose 3}-\sum_{i=1}^3{n_i \choose 3}+\sum^{X-1}_{j=0} \bigtriangleup M(H,c_j).
	\end{align*}
	
	 We suppose that the vertices in the first class of the all blue hypergraph will be recolored first to match the first class of $c$ and then continue to the next class. Note that $c_j$ has $j$ red vertices. Let the $i^{th}$ class be the class containing the blue vertex that will be recolored and $x$ be the number of red vertices in that class. Then, from Section $2.2$, 
	 \begin{align*}
	 \bigtriangleup M(H,c_j) &= \left[ {j \choose 2} - {x \choose 2}\right] - \left[ {N-j-1 \choose r} - {n_i-x-1 \choose 2}\right]\\
	 &= \left[ {j \choose 2} - {N-j-1 \choose 2}\right] - \left[ {x \choose 2} - {n_i-x-1 \choose 2}\right].
	 \end{align*}
	 
	 Note that while each vertex in the changing class is being recolored, the term $x$ ascends from $0$ to $x_i-1$. Thus,
	 \begin{align*}
	 M(H,c) &= {N \choose 3}-\sum_{i=1}^3{n_i \choose 3}+\sum^{X-1}_{j=0} \bigtriangleup M(H,c_j)\\
	 &={N \choose 3}-\sum_{i=1}^3{n_i \choose 3}+\sum^{X-1}_{j=0}\left[ {j \choose 2} - {N-j-1 \choose 2}\right]- \sum^{3}_{i=1}\sum^{x_i-1}_{x=0}\left[ {x \choose 2} - {n_i-x-1 \choose 2}\right].
	 \end{align*}
	 
	 Similarly, if we consider only the coloring with $X$ red vertices, then the terms
	 \[{N \choose 3}-\sum_{i=1}^3{n_i \choose 3}+\sum^{X-1}_{j=0}\left[ {j \choose 2} - {N-j-1 \choose 2}\right]\] in the function $M(H,c)$ are constant. Only the term
	 \[\sum^{3}_{i=1}\sum^{x_i-1}_{x=0}\left[ {x \choose 2} - {n_i-x-1 \choose 2}\right]\] is distinct and we denote this term by $S(x_1,x_2,x_3)$. Hence, the coloring with maximum value of $S(x_1,x_2,x_3)$ will have the minimum number of monochromatic edges. Remark that if $x_i = n_i$, then the term $\sum^{x_i-1}_{x=0}\left[ {x \choose 2} - {n_i-x-1 \choose 2} \right]$ cancels itself out. Hence, $S(n_1,x_2,x_3)=S(0,x_2,x_3)$ and similarly when $x_2=n_2$ or $x_3=n_3$.
	 
	 Next, we will determine $c^*_X$ by considering and comparing only among possible canonical forms according to the value $X$ and type of $H$. We will divide into several cases.

	 \textit{Case 1}: $0 \leqslant X < n_1$.
	 \begin{center}
	 	\begin{tabular}{ |P{3cm}|P{1.5cm}|P{1.5cm}|P{1.5cm}|P{3cm}|  }
	 		\hline
	 		Possible forms & $x_1$  & $x_2$ & $x_3$ & $S(x_1,x_2,x_3)$\\
	 		\hline
	 		$F_1$& $X$& $0$&$0$&$S(X,0,0)$\\
	 		$F_2$& $0$& $X$&$0$&$S(0,X,0)$\\
	 		$F_3$& $0$& $0$&$X$&$S(0,0,X)$\\
	 		\hline
	 	\end{tabular}
	 \end{center}
	 
	 \noindent 
	 We will compare among colorings in the forms $F_1$, $F_2$ and $F_3$. Note that if $n_1=n_2$, $F_1$ and $F_2$ are the same. Similarly, if $n_1=n_2=n_3$, $F_1,F_2$ and $F_3$ are the same. Then,
	 \begin{align*}
	 S(0,X,0) = \sum_{j=0}^{X-1}\left[ {j \choose 2}-{n_2-j-1\choose 2} \right]\leqslant \sum_{j=0}^{X-1}\left[ {j \choose 2}-{n_1-j-1\choose 2} \right] = S(X,0,0).
	 \end{align*}
	 
	 \noindent The equality holds only when $n_1=n_2$ or $n_2-1<2$. Since $3 \leqslant n_2 \leqslant n_3$, we can conclude that a coloring in the form $F_1$ has fewer monochromatic edges than $F_2$ and similarly for $F_3$. Hence, $c^*_X$ is in the form $F_1$.
	 
	 \textit{Case 2}: $n_1 \leqslant X < \frac{n_1 + n_2}{2}$.
	 \begin{center}
	 	\begin{tabular}{ |P{3cm}|P{1.5cm}|P{1.5cm}|P{1.5cm}|P{3cm}|  }
	 		\hline
	 		Possible forms & $x_1$  & $x_2$ & $x_3$&$S(x_1,x_2,x_3)$\\
	 		\hline		
	 		$F_2$& $0$& $X$&$0$&$S(0,X,0)$\\
	 		$F_3$& $0$& $0$&$X$&$S(0,0,X)$\\
	 		$F_4$& $n_1$& $X-n_1$&$0$&$S(n_1,X-n_1,0)$\\
	 		$F_5$& $n_1$& $0$&$X-n_1$&$S(n_1,0,X-n_1)$\\
	 		\hline
	 	\end{tabular}
	 \end{center}
	 In this case, we must have that $n_1 < n_2$. Similarly to \textit{Case 1}, we have that $F_2$ has fewer number monochromatic edges than $F_3$. Next, we will compare between colorings in the forms $F_4$ and $F_5$. If $n_2=n_3$, then both forms are the same. Thus,
	 \begin{align*}
	 S(n_1,0,X-n_1) =S(0,0,X-n_1)\leqslant S(0,X-n_1,0)=S(n_1,X-n_1,0).
	 \end{align*}
	 The equality holds only when $n_2=n_3$ or $n_3-1<2$. Since $3\leqslant n_3$, we have that $F_4$ has fewer number monochromatic edges than $F_5$. Finally, we will compare between colorings in the forms $F_2$ and $F_4$. Note that $X-n_1 < \frac{n_1 + n_2}{2} - n_1 = n_2 - \frac{n_1 + n_2}{2} < n_2 -X $. Then, \begin{align*}
	 S(0,X,0)&=\sum_{j=0}^{X-1}\left[ {j \choose 2}-{n_2-j-1\choose 2} \right]\\
	 &=\sum_{j=0}^{(X-n_1)-1}\left[ {j \choose 2}-{n_2-j-1\choose 2} \right]+\left[{X-n_1 \choose 2} +{X-n_1 +1 \choose 2} +\cdots+ {X-1 \choose 2} \right]\\
	 &\;\;\;\;- \left[{n_2-X\choose2}+{n_2-X+1\choose2}+\cdots+{n_1+n_2-X-1\choose2}\right]\\
	 &\leqslant \sum_{j=0}^{(X-n_1)-1}\left[ {j \choose 2}-{n_2-j-1\choose 2} \right] = S(0,X-n_1,0) = S(n_1, X-n_1,0).
	 \end{align*}
	 The equality holds only when $n_1+n_2-X-1 <2$. Since $X< \frac{n_1 + n_2}{2}$, this occurs only when $n_1+n_2 \leqslant 3$, i.e., $n_1 \leqslant n_2 <3$. Since $3 \leqslant n_2$, we have that $F_4$ gives fewer number monochromatic edges than $F_2$ and $c^*_X$ is in the form $F_4$.
	 
	 \textit{Case 3}: $\frac{n_1 + n_2}{2} \leqslant X < n_2$.
	 \begin{center}
	 	\begin{tabular}{ |P{3cm}|P{1.5cm}|P{1.5cm}|P{1.5cm}|P{3cm}|  }
	 		\hline
	 		Possible forms & $x_1$  & $x_2$ & $x_3$&$S(x_1,x_2,x_3)$\\
	 		\hline		
	 		$F_2$& $0$& $X$&$0$&$S(0,X,0)$\\
	 		$F_3$& $0$& $0$&$X$&$S(0,0,X)$\\
	 		$F_4$& $n_1$& $X-n_1$&$0$&$S(n_1,X-n_1,0)$\\
	 		$F_5$& $n_1$& $0$&$X-n_1$&$S(n_1,0,X-n_1)$\\
	 		\hline
	 	\end{tabular}
	 \end{center}
	 Similarly, we must have $n_1 < n_2$ in this case. The comparisons between $F_2$ and $F_3$ and between $F_4$ and $F_5$ are similar to the previous case. Note that $X-n_1 \geqslant \frac{n_1 + n_2}{2} - n_1 = n_2 - \frac{n_1 + n_2}{2} \geqslant n_2 -X $. Next, we will show the comparison between $F_2$ and $F_4$ which gives a contrary result as shown:\begin{align*}
	 S(0,X,0)&=\sum_{j=0}^{X-1}\left[ {j \choose 2}-{n_2-j-1\choose 2} \right]\\
	 &=\sum_{j=0}^{(X-n_1)-1}\left[ {j \choose 2}-{n_2-j-1\choose 2} \right]+\left[{X-n_1 \choose 2} +{X-n_1 +1 \choose 2} +\cdots+ {X-1 \choose 2} \right]\\
	 &\;\;\;\;- \left[{n_2-X\choose2}+{n_2-X+1\choose2}+\cdots+{n_1+n_2-X-1\choose2}\right]\\
	 &\geqslant \sum_{j=0}^{(X-n_1)-1}\left[ {j \choose 2}-{n_2-j-1\choose 2} \right] = S(0,X-n_1,0) = S(n_1, X-n_1,0).
	 \end{align*}
	 The equality holds only when $X=\frac{n_1+n_2}{2}$ or $X-1<2$. Since $\frac{n_1 + n_2}{2} \leqslant X < n_2$ and $n_1<n_2$, the condition that $X<3$ occurs only when $n_1=1$ and $n_2=2$ or $3$. Since $3 \leqslant n_1$, we have that $F_2$ gives fewer number monochromatic edges than $F_4$ and $c^*_X$ is in the form $F_2$ when $X>\frac{n_1+n_2}{2}$. If $X=\frac{n_1+n_2}{2}$, both forms give the same number of monochromatic edges.
	 
	 From now on, the cases will be divided by whether the hypergraph is of type A or B.
	 
	 \textit{Case 4A}: $n_2 \leqslant X < n_1+n_2$ and $H$ is a type A hypergraph.
	 \begin{center}
	 	\begin{tabular}{ |P{3cm}|P{1.5cm}|P{1.5cm}|P{1.5cm}|P{3cm}|  }
	 		\hline
	 		Possible forms & $x_1$  & $x_2$ & $x_3$&$S(x_1,x_2,x_3)$\\
	 		\hline		
	 		$F_3$& $0$& $0$&$X$&$S(0,0,X)$\\
	 		$F_4$& $n_1$& $X-n_1$&$0$&$S(n_1,X-n_1,0)$\\
	 		$F_5$& $n_1$& $0$&$X-n_1$&$S(n_1,0,X-n_1)$\\
	 		$F_6$& $X-n_2$& $n_2$&$0$&$S(X-n_2,n_2,0)$\\
	 		$F_7$& $0$& $n_2$&$X-n_2$&$S(0,n_2,X-n_2)$\\
	 		\hline
	 	\end{tabular}
	 \end{center}
	 Similarly to \textit{Case 2}, we have that $F_4$ has fewer number monochromatic edges than $F_5$. Next, we will compare between colorings in the forms $F_6$ and $F_7$. If $n_1=n_2=n_3$, then the forms $F_4, F_5, F_6$ and $F_7$ are the same. Thus,
	 \begin{align*}
	 S(0,n_2,X-n_2) =S(0,0,X-n_2) \leqslant S(X-n_2,0,0)=S(X-n_2,n_2,0).
	 \end{align*}
	 The equality holds only when $n_1=n_3$ or $n_3-1<2$. Since $n_3 \geqslant 3$, we have that $F_6$ has fewer number monochromatic edges than $F_7$. Next, we will compare between colorings in the forms $F_4$ and $F_6$. If $n_1=n_2$, then both forms are the same. Suppose that $n_1<n_2$. Thus, since $X< n_1+n_2$, %\allowdisplaybreaks
	 \begin{align*}
	 S(n_1, X-n_1,0) &= S(0,X-n_1,0)%=\sum_{j=0}^{(X-n_1)-1}\left[ {j \choose 2}-{n_2-j-1\choose 2} \right]
	 \\&= \left[{0 \choose 2} + {1 \choose 2} +\cdots+{X-n_2-1 \choose 2}\right]\\
	 &\;\;\;\;- \left[{n_1+n_2-X \choose 2} + {n_1+n_2-X+1 \choose 2} + \cdots + {n_1 -1 \choose 2}\right]\\
	 &\;\;\;\;+ \left[{X-n_2 \choose 2} + {X-n_2+1 \choose 2} +\cdots+{X-n_1-1 \choose 2}\right]\\
	 &\;\;\;\;- \left[{n_1\choose 2} + {n_1+1 \choose 2} + \cdots + {n_2-1 \choose 2}\right]\\
	 &\leqslant \left[{0 \choose 2} + {1 \choose 2} +\cdots+{X-n_2-1 \choose 2}\right]\\
	 &\;\;\;\;- \left[{n_1+n_2-X \choose 2} + {n_1+n_2-X+1 \choose 2} + \cdots + {n_1 -1 \choose 2}\right]\\
	 &= S(X-n_2,0,0) = S(X-n_2,n_2,0).
	 \end{align*}
	 The equality holds only when $n_2-1<2$. Since $ n_2\geqslant 3$, we have that $F_6$ has fewer number monochromatic edges than $F_4$. Finally, we will compare between colorings in the forms $F_3$ and $F_6$. Then, since $n_2 \leqslant X < n_1+n_2 \leqslant n_3$, \allowdisplaybreaks
	 
	 \begin{align*}
	 S(0,0,X) &=\left[{0 \choose 2} + {1 \choose 2} +\cdots+{X-1 \choose 2}\right]- \left[{n_3-X \choose 2} + {n_3-X+1 \choose 2} + \cdots + {n_3 -1 \choose 2}\right]\\
	 &\leqslant\left[{0 \choose 2} + {1 \choose 2} +\cdots+{X-1 \choose 2}\right]- \left[{n_3-X \choose 2} + {n_3-X+1 \choose 2} + \cdots + {n_3 -1 \choose 2}\right]\\
	 &\;\;\;\;- \left[{X-n_2 \choose 2} + {X-n_2+1 \choose 2} +\cdots+{X-1 \choose 2}\right]\\
	 &\;\;\;\;+ \left[{n_3-n_2\choose 2} + {n_3+n_2+1 \choose 2} + \cdots + {n_3-1 \choose 2}\right]\\
	 &\;\;\;\;- \left[{n_1+n_2-X \choose 2} + {n_1+n_2-X+1 \choose 2} +\cdots+{n_3-X-1 \choose 2}\right]\\
	 &\;\;\;\;+ \left[{n_1\choose 2} + {n_1+1 \choose 2} + \cdots + {n_3-n_2-1 \choose 2}\right]\\
	 &= \left[{0 \choose 2} + {1 \choose 2} +\cdots+{X-n_2-1 \choose 2}\right]\\ 
	 &\;\;\;\;- \left[{n_1+n_2-X \choose 2} + {n_1+n_2-X+1 \choose 2} + \cdots + {n_1 -1 \choose 2}\right]\\
	 &= \sum_{j=0}^{(X-n_2)-1}\left[ {j \choose 2}-{n_1-j-1\choose 2} \right]= S(X-n_2,0,0) = S(X-n_2,n_2,0).
	 \end{align*}
	 The equality holds only when $n_3-1<2$. Since $n_3 \geqslant 3$,  $F_6$ gives fewer number monochromatic edges than $F_3$ and $c^*_X$ is in the form $F_6$.
	 
	 \textit{Case 5A}: $n_1+n_2 \leqslant X \leqslant \lfloor{\frac{N}{2}}\rfloor$ and $H$ is a type A hypergraph.
	 \begin{center}
	 	\begin{tabular}{ |P{3cm}|P{1.5cm}|P{1.5cm}|P{2cm}|P{4cm}|  }
	 		\hline
	 		Possible forms & $x_1$  & $x_2$ & $x_3$&$S(x_1,x_2,x_3)$\\
	 		\hline		
	 		$F_3$& $0$& $0$&$X$&$S(0,0,X)$\\
	 		$F_{10}$& $n_1$& $n_2$&$X-n_1-n_2$&$S(n_1,n_2,X-n_1-n_2)$\\
	 		\hline
	 	\end{tabular}
	 \end{center}
	 Note that this is only the case that we will consider $F_{10}$. We only compare  between colorings in the forms $F_3$ and $F_{10}$. Then, since $X \leqslant \lfloor{\frac{n_1+n_2+n_3}{2}}\rfloor$,
	 \begin{align*}
	 S(0,0,X)&= \left[{0 \choose 2} + {1 \choose 2} +\cdots+{X-1 \choose 2}\right]- \left[{n_3-X \choose 2} + {n_3-X+1 \choose 2} +\cdots+{n_3-1 \choose 2}\right]\\
	 &= \left[{0 \choose 2} + {1 \choose 2} +\cdots+{X-n_1-n_2-1 \choose 2}\right]\\
	 &\;\;\;\;- \left[{n_1+n_2+n_3-X \choose 2} + {n_1+n_2+n_3-X+1 \choose 2} + \cdots + {n_3 -1 \choose 2}\right]\\
	 &\;\;\;\;+ \left[{X-n_1-n_2\choose2}+{X-n_1-n_2+1\choose2}+\cdots+{X-1\choose2}\right]\\
	 &\;\;\;\;-\left[{n_3-X\choose2}+{n_3-X+1\choose2}+\cdots+{n_1+n_2+n_3-X-1\choose2}\right]\\
	 &\leqslant \left[{0 \choose 2} + {1 \choose 2} +\cdots+{X-n_1-n_2-1 \choose 2}\right]\\ 
	 &\;\;\;\;- \left[{n_1+n_2+n_3-X \choose 2} + {n_1+n_2+n_3-X+1 \choose 2} + \cdots + {n_3 -1 \choose 2}\right]\\
	 &= S(0,0,X-n_1-n_2) = S(n_1,n_2,X-n_1-n_2).
	 \end{align*}
	 The equality holds only when $X=\frac{N}{2}$ or $N-X-1<2$. Since $X \leqslant \lfloor{\frac{N}{2}}\rfloor$, the condition that $N-X<3$ occurs only when $N \leqslant 4$. This is impossible because $n_3 \geqslant 3$. If $X= \frac{N}{2}$, then both forms have the same number of monochromatic edges. However, if we relabel the names of the colors, then both forms are the same. Hence, $F_{10}$ gives fewer number monochromatic edges than $F_3$ and $c^*_X$ is in the form $F_{10}$. Next, we will focus on the other cases of type B hypergraphs.
	 
	 \textit{Case 4B}: $n_2 \leqslant X < n_3$ and $H$ is a type B hypergraph.
	 \begin{center}
	 	\begin{tabular}{ |P{3cm}|P{1.5cm}|P{1.5cm}|P{1.5cm}|P{3cm}|  }
	 		\hline
	 		Possible canonical forms & $x_1$  & $x_2$ & $x_3$&$S(x_1,x_2,x_3)$\\
	 		\hline		
	 		$F_3$& $0$& $0$&$X$&$S(0,0,X)$\\
	 		$F_4$& $n_1$& $X-n_1$&$0$&$S(n_1,X-n_1,0)$\\
	 		$F_5$& $n_1$& $0$&$X-n_1$&$S(n_1,0,X-n_1)$\\
	 		$F_6$& $X-n_2$& $n_2$&$0$&$S(X-n_2,n_2,0)$\\
	 		$F_7$& $0$& $n_2$&$X-n_2$&$S(0,n_2,X-n_2)$\\
	 		\hline
	 	\end{tabular}
	 \end{center}
	 Similarly to \textit{Case 4A}, $F_4$ and $F_6$ have fewer monochromatic edges than $F_5$ and $F_7$, respectively, and $F_6$ has fewer monochromatic edges than $F_4$. Next, we have to compare between $F_3$ and $F_6$ to determine that which form $c^*_X$ is. Due to complexity of the comparison, we will conclude that $c^*_X$ is in the form $F_3$ or $F_6$.
	 
	 \textit{Case 5B}: $n_3 \leqslant X \leqslant \lfloor{\frac{n_1+n_2+n_3}{2}}\rfloor $ and $H$ is a type B hypergraph.
	 \begin{center}
	 	\begin{tabular}{ |P{3cm}|P{1.5cm}|P{1.5cm}|P{1.5cm}|P{3cm}|  }
	 		\hline
	 		Possible canonical forms & $x_1$  & $x_2$ & $x_3$&$S(x_1,x_2,x_3)$\\
	 		\hline		
	 		$F_4$& $n_1$& $X-n_1$&$0$&$S(n_1,X-n_1,0)$\\
	 		$F_5$& $n_1$& $0$&$X-n_1$&$S(n_1,0,X-n_1)$\\
	 		$F_6$& $X-n_2$& $n_2$&$0$&$S(X-n_2,n_2,0)$\\
	 		$F_7$& $0$& $n_2$&$X-n_2$&$S(0,n_2,X-n_2)$\\
	 		$F_8$& $X-n_3$& $0$&$n_3$&$S(X-n_3,0,n_3)$\\
	 		$F_9$& $0$& $X-n_3$&$n_3$&$S(0,X-n_3,n_3)$\\
	 		\hline
	 	\end{tabular}
	 \end{center}
	 Similarly to \textit{Case 4A}, $F_4$ and $F_6$ have fewer monochromatic edges than $F_5$ and $F_7$, respectively, and $F_6$ has fewer monochromatic edges than $F_4$. Next, we will compare between colorings in the forms $F_8$ and $F_9$. If $n_1=n_2$, then both forms are the same. Thus, \begin{align*} S(0,X-n_3,n_3) =S(0,X-n_3,0)=S(X-n_3,0,0)==S(X-n_3, 0,n_3).
	 \end{align*}
	 The equality holds only when $n_1=n_2$ or $n_2-1 <2$. Since $3 \leqslant n_2$, we have that $F_8$ gives fewer number monochromatic edges than $F_9$. Finally, we will compare between colorings in the forms $F_6$ and $F_8$. If $n_2=n_3$, then both forms are the same. We may suppose that $n_2<n_3$. Thus, since $X \leqslant \lfloor{\frac{N}{2}}\rfloor $, 
	 
	 \begin{align*}
	 S(X-n_2, n_2,0)&= S(X-n_2,0,0)\\
	 &= \left[{0 \choose 2} + {1 \choose 2} +\cdots+{X-n_3-1 \choose 2}\right]\\
	 &\;\;\;\;+ \left[{X-n_3 \choose 2} + {X-n_3+1 \choose 2} +\cdots+{X-n_2-1 \choose 2}\right]\\
	 &\;\;\;\;- \left[{n_1+n_2 -X\choose 2} + {n_1+n_2-X+1 \choose 2} + \cdots + 
	{n_1+n_3-X-1 \choose 2}\right]\\
	&\;\;\;\;- \left[{n_1+n_3-X \choose 2} + {n_1+n_3-X+1 \choose 2} + \cdots + {n_1 -1 \choose 2}\right]\\
	 &\leqslant \left[{0 \choose 2} + {1 \choose 2} +\cdots+{X-n_3-1 \choose 2}\right]\\
	 &- \left[{n_1+n_3-X \choose 2} + {n_1+n_3-X+1 \choose 2} + \cdots + {n_1 -1 \choose 2}\right]\\
	 &= S(X-n_3,0,0) = S(X-n_3,0,n_3).
	 \end{align*}
	 The equality holds only when $X=\frac{N}{2}$ or $n_1+n_3-X-1<2$. First, if $X=\frac{N}{2}$, then both forms have the same number of monochromatic edges. However, if we relabel the names of the colors, then both forms are the same. We will focus on the condition of $n_1+n_3-X<3$ where $3 \leqslant n_1 \leqslant n_2<n_3\leqslant X < \frac{N}{2}$ and $n_1+n_2>n_3$. Suppose that $n_1+n_3-X<3$.
	 
	 If $N$ is even, then $3 > n_1+n_2-X \geqslant n_1+n_3-\left(\frac{N}{2}-1\right)=\frac{n_1+n_3-n_2}{2}+1$. Thus, $n_1+n_3-n_2 <4$. Since $1 \leqslant n_3-n_2$, we have that $n_1<3$ which contradicts with $3\leqslant n_1$.
	 
	 If $N$ is odd, then $3 > n_1+n_2-X \geqslant n_1+n_3-\left(\frac{N-1}{2}\right)=\frac{n_1+n_3-n_2+1}{2}$. Thus, $n_1+n_3-n_2 <5$. Since $1 \leqslant n_3-n_2$, we have that $n_1<4$. Consequently, it is only possible when $n_1=3$. Note that $n_2 < n_3$ and $n_1 + n_2=3+n_2 > n_3$. For $N = 3+n_2+n_3$ to be odd, $n_2$ and $n_3$ must have the same parity. Thus, $n_3=n_2+2$ and $5>n_1+n_3-n_2=3+n_2+2-n_2=5$, which is a contradiction.
	 
	 Now, we have that $n_1+n_3-X \geqslant 3$. Hence, $F_8$ gives fewer number monochromatic edges than $F_6$ and $c^*_X$ is in the form $F_8$.
	 
	 To sum up, we have already determined (as shown in the table below) the canonical forms $c^*_X$ that has minimum number of monochromatic edges for each condition of hypergraphs and range of the number of red vertices $X$.
	 \begin{center}
	 	\begin{tabular}{ |P{1cm}|P{3.5cm}|P{1.7cm}|P{1cm}|P{3.5cm}|P{1.7cm}|  }
	 		\hline
	 		\multicolumn{6}{|c|}{List of best canonical forms} \\
	 		\hline
	 		\multicolumn{3}{|c|}{Type A Hypergraphs $n_1+n_2 \leqslant n_3$} & \multicolumn{3}{|c|}{Type B Hypergraphs $n_1+n_2>n_3$} \\
	 		\hline
	 		Cases&Number of red vertices & Canonical form&Cases &Number of red vertices&Canonical form\\
	 		\hline
	 		$1$&$0 \leqslant X < n_1$&   $F_1$&$1$&$0 \leqslant X < n_1$  &$F_1$\\
	 		$2$&$n_1 \leqslant X < \frac{n_1+n_2}{2}$   &$F_4$&$2$&$n_1 \leqslant X < \frac{n_1+n_2}{2}$&   $F_4$\\
	 		$3$&$X = \frac{n_1+n_2}{2}$& $F_2$ or $ F_4$&$3$&$X = \frac{n_1+n_2}{2}$ &$F_2$ or $ F_4$\\
	 		$ $&$\frac{n_1+n_2}{2} < X < n_2$& $F_2$&$ $&$\frac{n_1+n_2}{2} < X < n_2$ &$F_2$\\
	 		$4A$&$n_2 \leqslant X < n_1 +n_2$&  $F_6$&$4B$&$n_2 \leqslant X <n_3$&$F_3$ or $F_6$\\
	 		$5A$&$n_1 + n_2 \leqslant X \leqslant \lfloor{\frac{N}{2}}\rfloor$&$F_{10}$&$5B$&$n_3 \leqslant X \leqslant \lfloor{\frac{N}{2}}\rfloor$ &   $F_8$\\
	 		
	 		\hline
	 	\end{tabular}
	 \end{center}
	 
	 Note that, in \textit{Case 3}, $c^*_X$ is only in the form $F_2$ when $\frac{n_1+n_2}{2} < X < n_2$. However, if $X=\frac{n_1+n_2}{2}$, then $F_2$ and $F_4$ give the same number of monochromatic edges. Moreover, in \textit{Case 4B}, we have not compared the colorings in the form $F_3$ and $F_6$. The uniqueness of $c^*_X$ of the other cases will be also considered. We can see that the inequalities in some cases are equal when the sizes of some parts are equal which means that those canonical forms are equivalent. For example, in \textit{Case 1}, if $n_2=n_3$, $F_2$ is equivalent to $F_3$. The remaining inequalities are equal when $X$ is equal to some certain value, for example, in \textit{Case 5A} and \textit{Case 5B} when $X = \frac{N}{2}$; the colorings in the forms $F_3$ and $F_{10}$ are isomorphic up to a permutation of the name of colors and so are colorings in the forms $F_6$ and $F_8$ in \textit{Case 5B}.
	 
	 Hence, apart from \textit{Case 3} where $X=\frac{n_1+n_2}{2}$ and \textit{Case 4B}, a coloring with red vertices in the form, according to the previous table, has fewer monochromatic edges than other colorings with the same amount of red vertices and the uniqueness follows.
	 
	 Next, we will make comparisons between colorings with different values of $X$. We will show that any $c^*_X$ with $X\leqslant \lfloor{\frac{N}{2}}\rfloor -1$ has strictly more monochromatic edges than some colorings. We, hence, would like to show that $M(H,c^*_{X+1})-M(H,c^*_{X}) < 0$ for each $0 \leqslant X < \lfloor{\frac{N}{2}}\rfloor-1$. Note that, if $c^*_X$ and $c^*_{X+1}$ are in the same canonical form, we will consider $\displaystyle \bigtriangleup M(H,c^*_X)$ instead. Again, we will divide into several cases conforming to the value of $X$ and the type of $H$.
	 
	 \textit{Case 1}: $0 \leqslant X < n_1$.
	 
	 We have that $c^*_X$ is in the form $F_1$ with $x_1 = X$, $x_2 =0$ and $x_3=0$. Then,\begin{align*} \bigtriangleup_1M(H,c^*_X)= \left[ {X \choose 2} - {X \choose 2}\right] - \left[ {N-X-1 \choose 2} - {n_1-X-1 \choose 2}\right]\leqslant 0.
	 \end{align*}
	 The equality holds only when $N-X-1<2$, which is impossible.
	 
	 \textit{Case 2}: $n_1 \leqslant X < \frac{n_1 + n_2}{2}$.
	 
	 We have that $c^*_X$ is in the form $F_4$ with $x_1 = n_1$, $x_2 =X-n_1$ and $x_3=0$. Then, \begin{align*}
	 \bigtriangleup_2M(H,c^*_X)= \left[ {X \choose 2}+{n_1+n_2-X-1 \choose 2}\right]-\left[ {X-n_1 \choose 2} +  {N-X-1 \choose 2}  \right].
	 \end{align*} 
	 We will show that $\bigtriangleup_2M(H,c^*_X)<0$ by Proposition 4. We have $X + (n_1+n_2-X-1) = n_1+n_2 -1 \leqslant n_3 +n_2 -1 =(X - n_1) +(N-X-1)$,
	 $X-n_1 < X$ and $n_1+n_2-X-1 < N -X -1$. Since $2 \leqslant N-X-1$, we have that $\displaystyle \bigtriangleup_2M(H,c^*_X)<0$.
	 
	 \textit{Case 3}: $\frac{n_1 + n_2}{2} \leqslant X < n_2$.
	 
	 We have that $c^*_X$ is in the form $F_2$ or $F_4$. We will show that $\bigtriangleup_2M(H,c^*_X) < 0$ for both forms. If $X =\frac{n_1 + n_2}{2}$ and $ c^*_X$ is in the form $F_4$ with $x_1 = n_1$, $x_2=X-n_1$ and $x_3 = 0$, then $\bigtriangleup_2M(H,c^*_X) < 0$ similarly as in \textit{Case 2}.
	 
	 If $\frac{n_1 + n_2}{2} \leqslant X < n_2$ and $c^*_X$ is in the form $F_2$ with $x_1 = 0$, $x_2 =X$ and $x_3=0$, then \begin{align*}
	 \bigtriangleup_2M(H,c^*_X) = \left[ {X \choose 2} - {X \choose 2}\right] - \left[ {N-X-1 \choose 2} - {n_2-X-1 \choose 2}\right]\leqslant 0.
	 \end{align*}
	 The equality holds only when $N-X-1<2$, which is impossible.
	 
	 Again, from this point, the cases will be divided by whether the hypergraph is of type A or B.
	 
	 \textit{Case 4A}: $n_2 \leqslant X < n_1+n_2$ and $H$ is a type A hypergraph.
	 
	 We have that $c^*_X$ is in the form $F_6$ with $x_1 = X-n_2$, $x_2 =n_2$ and $x_3=0$. Then, \begin{align*}
	 \bigtriangleup_1M(H,c^*_X) =\left[{X\choose 2}+{n_1+n_2-X-1 \choose2} \right] -\left[ {X-n_2 \choose2}+{N-X-1\choose 2} \right].
	 \end{align*}
	 We will show that $\displaystyle \bigtriangleup_1M(H,c^*_X)<0$ by Proposition 4. We have $X+(n_1+n_2-X-1)=n_1+n_2-1 \leqslant n_1+n_3-1=(X-n_2)+(N-X-1)$, $X-n_2 < X$ and $n_1+n_2-X-1<N-X-1$. Since $2 \leqslant N-X-1$, we have that $\displaystyle \bigtriangleup_1M(H,c^*_X)<0$.
	 
	 \textit{Case 5A}: $n_1+n_2 \leqslant X \leqslant \lfloor{\frac{N}{2}}\rfloor$ and $H$ is a type A hypergraph.
	 
	 We have that $c^*_X$ is in the form $F_{10}$ with $x_1 = n_1$, $x_2=n_2$ and $x_3 = X-n_1-n_2$. Then, \begin{align*}
	 \bigtriangleup_3M(H,c^*_X)=\left[{X\choose 2} - {X-n_1-n_2 \choose2}\right] - \left[{N-X-1\choose 2} - {N-X-1 \choose2}\right] \geqslant 0.
	 \end{align*}
	 The equality holds only when $X<2$, which is impossible. This yields a contrary result: The number of monochromatic edges increases when the number of red vertices increases. Hence, $F_{10}$ gives the minimum number of monochromatic edges when $X=n_1+n_2$ instead.
	 
	 Next, we will consider the last two cases of type B hypergraphs.
	 
	 \textit{Case 4B}: $n_2 \leqslant X < n_3$ and $H$ is a type B hypergraph.
	 
	 We have that $c^*_X$ is in the form $F_3$ or $F_6$. We will show that $\bigtriangleup_3M(H,c^*_X) < 0$ for $F_3$ and $\bigtriangleup_1M(H,c^*_X) < 0$ for $F_6$. If $c^*_X$ is in the form $F_3$ with $x_1 = 0$, $x_2=0$ and $x_3 = X$, then \begin{align*}
	 \bigtriangleup_3M(H,c^*_X) = \left[ {X \choose 2} - {X \choose 2}\right] - \left[ {N-X-1 \choose 2} - {n_3-X-1 \choose 2}\right]\leqslant 0.
	 \end{align*}
	 Note that the inequality is equal when $N-X-1<2$ which is impossible. Next, if $c^*_X$ is in the form $F_6$ with $x_1 = X-n_2$, $x_2=n_2$ and $x_3 = 0$, then \begin{align*}
	 \bigtriangleup_1M(H,c^*_X) = \left[{X \choose 2} +{n_1 +n_2-X-1\choose 2} \right]-\left[{N-X-1 \choose 2} + {X-n_2 \choose 2}\right].
	 \end{align*}
	 We will show that $\displaystyle \bigtriangleup_1M(H,c^*_X) < 0$ by Proposition 4. We have $X+(n_1+n_2-X-1) = n_1+n_2 -1 \leqslant n_1+n_3 -1 = (N-X-1) +(X-n_2) $, $X > X-n_2$ and $n_1 +n_2-X-1 < N-X-1$. Since $2\leqslant N-X-1$, we have that $\displaystyle \bigtriangleup_1M(H,c^*_X) < 0$.
	 
	 \textit{Case 5B}: $n_3 \leqslant X \leqslant \lfloor{\frac{N}{2}}\rfloor $ and $H$ is a type B hypergraph.
	 
	 We have that $c^*_X$ is in the form $F_{8}$ with $x_1 = X-n_3$, $x_2=0$ and $x_3 = n_3$. Then, \begin{align*}
	 \bigtriangleup_1M(H,c^*_X)&=\left[{X\choose 2} - {X-n_3 \choose2}\right] - \left[{N-X-1\choose 2} - {n_1-(X-n_3)-1 \choose2}\right]\\
	 &= \frac{n_1^2+2n_1n_3+2X(N-n_1)-3n_1+3N-N^2-4n_3}{2}.
	 \end{align*}
	 Since we cannot apply any lemmas to $\bigtriangleup_1M(H,c^*_X)$, we expand the binomial coefficient terms and see when  $\displaystyle \bigtriangleup_1M(H,c^*_X)$ is fewer than $0$. Consequently, $\displaystyle \bigtriangleup_1M(H,c^*_X) < 0$ if and only if \[X< \left \lceil\frac{N^2-3N-n_1^2-2n_1n_3+3n_1+4n_3}{2(N-n_1)}\right\rceil.\]
	 
	 Write $X'$ for $\left\lceil\frac{N^2-3N-n_1^2-2n_1n_3+3n_1+4n_3}{2(N-n_1)}\right\rceil$. We have that the $c^*_{X'}$ has the minimum number of monochromatic edges among all colorings in the form $F_8$ and we will show that $X'\leqslant\frac{N}{2}$.
	 \begin{align*}
	 X'=\left\lceil\frac{N^2-3N-n_1^2-2n_1n_3+3n_1+4n_3}{2(N-n_1)}\right\rceil
	 \leqslant \frac{N^2 -Nn_1- 2n_3+2n_3}{2(N-n_1)}= \frac{N}{2}.
	 \end{align*}
	 
	 Hence, we have shown all the comparisons between colorings and we can conclude that:
	 \begin{enumerate}
	 	\item If $H$ is a type A hypergraph, then the coloring with $n_1 + n_2$ red vertices in the form $F_{10}$ has the minimum number of monochromatic edges.
	 	\item If $H$ is a type B hypergraph, then the coloring with $X'$ red vertices in the form $F_8$ has the minimum number of monochromatic edges.
	 \end{enumerate}
	 We have already proved that those minimum colorings are the unique colorings that has the minimum number of monochromatic edges among colorings with the same red vertices. Furthermore, we have shown that $\bigtriangleup M(H,c^*_X)$ is fewer than zero when $X$ is fewer than $n_1+n_2$ in a type A hypergraph and when $X$ is less than $X'$ in a type B hypergraph. Hence, those minimum colorings are the unique colorings that has minimum number of monochromatic edges among all colorings.
	\subsection{Hypergraphs with $n_1 < 3$ or $n_2 < 3$}
	In this subsection, we will prove the remaining cases which are unbalance complete tripartite hypergraphs with some classes smaller than $3$. These cases are easy and straightforward but contain fuzzy details. First, we will consider an unbalanced complete tripartite $3$-uniform hypergraphs with $n_1 \leqslant n_2 < n_3 \geqslant 3$. There are three possibilities for these hypergraphs:
	
	\textit{Case i}: $n_1 = n_2 =1$ and $n_3 \geqslant 3$.
	
	Since we have that the first two classes are smaller than $3$, no edge can be contained in the first two classes. Thus, 
	\[M(H,c)={X\choose 3}+{N-X\choose 3}-{x_3 \choose 3}-{n_3-x_3\choose3}.\]
	We have that $M(H,c) \geqslant 0$. Suppose that $M(H,c)=0$. Since $X \geqslant x_3$ and $N-X \geqslant n_3-x_3$, we have that ${X\choose 3}={x_3 \choose 3}$ and ${N-X\choose 3}={n_3-x_3\choose3}$.  Since $N=n_1+n_2+n_3=1+1+n_3\geqslant 5$, at least $3$ vertices are colored the same say red, i.e., $X\geqslant 3$. Then, $X=x_3$ which implies that $N-X=n_3+2-X>n_3-x_3$. Consequently, $3 > N-X = n_3+2-x_3$, i.e., $n_3=x_3$. This means that $c$ is a coloring such that the third class contains all red vertices and no blue vertex and the first two classes are all blue. Hence, we have already determined the minimum coloring and showed that it is unique up to a permutation of colors and classes.
	%\begin{figure}[h]
	%	\centering
	%	\includegraphics[width=5cm, height = 5cm]{i}
	%	\caption{The minimum coloring for an unbalanced complete tripartite $3$-uniform hypergraph with $n_1=n_2=1$ and $n_3 \geqslant 3$.}
	%\end{figure}

	\textit{Case ii}: $n_1=1, n_2 =2$ and $n_3 \geqslant 3$.
	
	Again, no edge can be contained in the first two classes. Thus, 
	\[M(H,c)={X\choose 3}+{N-X\choose 3}-{x_3 \choose 3}-{n_3-x_3\choose3}.\]
	We will show that $M(H,c) \geqslant 1$. We have that $X \geqslant x_3$ and $N-X \geqslant n_3-x_3$. Since $N=n_1+n_2+n_3=1+1+n_3\geqslant 6$, at least $3$ vertices are colored the same say red, i.e., $X\geqslant 3$. If $X>x_3$, then $M(H,c)>0$. Suppose that $X=x_3$. Then, $N-X=n_3+3-x_3>n_3-x_3 \geqslant 3$. Hence, $M(H,c)>0$. Next, suppose that $M(H,c)=1$. If $X>x_3$, then
	\begin{align*}
	1\leqslant {X-1 \choose 2} \leqslant{X\choose 3}-{x_3\choose 3}\leqslant{X\choose 3}+{N-X\choose 3}-{x_3 \choose 3}-{n_3-x_3\choose3}= M(H,c). 
	\end{align*}
	This implies that $X=3$ and so $N-X \geqslant 3$, moreover, ${N-X\choose 3}={n_3-x_3\choose3}$. Hence, $N-X=n_3-x_3$. Now, we have $N-3=N-X=n_3-x_3=N-3-x_3$, i.e., $x_3=0$. This means $c$ is a coloring such that the third class contains no red vertices and the first two classes are all red.
	
	Suppose that $X=x_3$. Then,
	\begin{align*}
	1=M(H,c)={X\choose 3}+{N-X\choose 3}-{x_3 \choose 3}-{n_3-x_3\choose3}={n_3-x_3+3\choose 3}-{n_3-x_3\choose3}. 
	\end{align*} It is only possible when $n_3-x_3=0$ or $c$ is a coloring such that the third class contains all red vertices and no blue vertex and the first two classes are all blue. Note that if we relabel the names of colors, then both colorings are the same. Hence, we have already determined the minimum coloring and showed that it is unique up to a permutation of colors and classes.
	%\begin{figure}[h]
	%	\centering
	%	\includegraphics[width=5cm, height = 5cm]{ii}
	%	\caption{The minimum coloring for an unbalanced complete tripartite $3$-uniform hypergraph with $n_1=1, n_2=2$ and $n_3 \geqslant 3$.}
	%\end{figure}
	
	\textit{Case iii}: $n_1=2, n_2 =2$ and $n_3 \geqslant 3$.
	
	Similarly, no edges can be contained in the first two classes. Thus, 
	\[M(H,c)={X\choose 3}+{N-X\choose 3}-{x_3 \choose 3}-{n_3-x_3\choose3}.\]
	We will show that $M(H,c) \geqslant 4$. We have that $X \geqslant x_3$ and $N-X \geqslant n_3-x_3$. If there is a color, say red, such that all vertices of that color are only in the third class, then, we have $X=x_3$ and $N-X=n_3+4-x_3\geqslant 4$. Thus,
	\begin{align*}
	M(H,c)&= {N-X\choose 3}-{N-X-4\choose3}\\
	&={N-X-1 \choose 2}+{N-X-2 \choose 2}+{N-X-3 \choose 2}+{N-X-4 \choose 2}\\
	&\geqslant {3 \choose 2}+{2 \choose 2} = 4.
	\end{align*} The equality holds only when $N-X=4$, i.e., $n_3=x_3$. Hence, if $M(H,c)=4$, then $c$ is a colorings such that the third class contains all red vertices but no blue vertex and the first two classes are all blue.
	
	Suppose that there is no color such that all vertices of that color are only in the third class, i.e., $X>x_3$ and $N-X>n_3-x_3$. Since $N=n_1+n_2+n_3=2+2+n_3\geqslant 7$, at least $4$ vertices are colored the same say red, i.e., $X\geqslant 4$. If $X \geqslant 5$, then
	\begin{align*}
	M(H,c)\geqslant {X\choose 3}-{x_3\choose3}\geqslant {X\choose 3}-{X-1\choose3}={X-1 \choose 2} \geqslant {4 \choose 2} = 6.
	\end{align*}
	If $X=4$, then $N-X \geqslant 3$ and
	\begin{align*}
	M(H,c)\geqslant {X\choose 3}-{x_3\choose3} + 1\geqslant {X\choose 3}-{X-1\choose3}+1={X-1 \choose 2}+1= 4.
	\end{align*}
	The equality holds only when $N-X =3$ and $x_3=X-1$, i.e., $N=7$ and $x_3=3$. This means that $n_3=3$. Hence, if $M(H,c)=4$, then $c$ is a coloring of $H$ which has $7$ vertices such that the third class is all red, the second class is all blue and the first class has one red and one blue vertices. This implies that when $n_3=3$, minimum colorings are not unique.
	%\begin{figure}[h]
	%	\centering
	%	\includegraphics[width=6cm, height = 6cm]{iii1}
	%	\caption{The minimum coloring for an unbalanced complete tripartite $3$-uniform hypergraph with $n_1=n_2=2$ and $3 < n_3$.}
	%\end{figure}
	
	%\begin{figure}[h]
	%	\centering
	%	\begin{minipage}[b]{0.4\textwidth}
	%		\includegraphics[width=6cm, height = 6cm]{iii2}
	%	\end{minipage}
	%	\hfill
	%	\begin{minipage}[b]{0.4\textwidth}
	%		\includegraphics[width=6cm, height = 6cm]{iii3}
	%	\end{minipage}
	%	\caption{minimum colorings for an unbalanced complete tripartite $3$-uniform hypergraph with $n_1=n_2=2$ and $3 = n_3$.}
	%\end{figure}
	%\begin{figure}[h]
	%	\centering
	%	\includegraphics[width=5cm, height = 5cm]{iii2}
	%	\caption{An minimum coloring for an unbalanced complete tripartite $3$-uniform hypergraph with $n_1=n_2=2$ and $3 = n_3$.}
	%\end{figure}
	
	%\begin{figure}[h]
	%	\centering
	%	\includegraphics[width=5cm, height = 5cm]{iii3}
	%	\caption{An minimum coloring for an unbalanced complete tripartite $3$-uniform hypergraph with $n_1=n_2=2$ and $3 = n_3$.}
	%\end{figure}
	
	Finally, the last class is a hypergraph such that only the first class is smaller than $3$.
	
	\textit{Case iv}: $n_1<3$ and $3 \leqslant n_2 \leqslant n_3$.
	
	Fortunately, this case conforms to almost all cases in the previous subsection since we assume that $n_2 \geqslant 3$. However, there are two points that we use the fact that $n_1\geqslant 3$. The first one is in the last part of \textit{Case 3} where we compare and determine which form of $F_2$ and $F_4$ has fewer monochromatic edges. Since we do not have that $n_1\geqslant 3$, it is possible that both forms have the same number of monochromatic edges. This is not problematic because both of them have strictly more monochromatic edges than some coloring according to the next comparisons. The next case is in the last part of \textit{Case 5B} where we compare and determine which form of $F_6$ and $F_8$ has fewer monochromatic edges. Again, since we do not have that $n_1\geqslant 3$, we may have that $n_1+n_3-X<3$ where $n_2<n_3\leqslant X < \frac{n_1+n_2+n_3}{2}$ and $n_1+n_2>n_3$. If this condition occurs, we will have that $F_6$ and $F_8$ have the same number of monochromatic edges. Since $n_1<3$, the condition is possible only when ($N$ is even and $n_1<3$) or ($N$ is odd and $n_1 \leqslant 3$).
	
	Suppose that $N$ is even. If $n_1=1$, then we have that $n_3 < n_1+n_2 = n_2+1$. Since $n_2<n_3$, there is no choice for $n_3$. If $n_1=2$, then we have that $n_3 < n_1+n_2= n_2+2$. Since $n_2<n_3$, $n_3=n_2+1$. However, for $N$ to be even, $n_2$ and $n_3$ must have different parity, which is impossible.
	
	Suppose that $N$ is odd. Again, it is impossible for $n_1=1$. Since $n_1<3$, we have $n_1=2$. Similarly, $n_3=n_2+1$ and $N=n_1+n_2+n_3=2+n_2+n_2+1=2(n_2+1)+1$ which is odd. Consequently, the condition is possible only here and $F_6$ and $F_8$ have the same number of monochromatic edges. Hence, if $H$ is a hypergraph with $n_1=2$ and $ 3\leqslant n_2=n_3-1$, then there are only two colorings (each unique up to a permutation of colors and classes) that have minimum number of monochromatic which are colorings with $X=\lfloor{\frac{N}{2}}\rfloor$ in the form $F_6$ and $F_8$.
	%\begin{figure}[h]
	%	\centering
	%	\begin{minipage}[b]{0.4\textwidth}
	%		\includegraphics[width=6cm, height = 6cm]{iv1}
	%	\end{minipage}
	%	\hfill
	%	\begin{minipage}[b]{0.4\textwidth}
	%		\includegraphics[width=6cm, height = 6cm]{iv2}
	%	\end{minipage}
	%	\caption{minimum colorings in the form $F_8$ and $F_6$, respectively, for an unbalanced complete tripartite $3$-uniform hypergraph with $n_1=2$, $3 \leqslant n_2$ and $ n_3 = n_2+1$.}
	%\end{figure}
	
	Now, we have determined the minimum colorings of all unbalanced complete tripartite $3$-uniform hypergraphs. 
\end{proof}

%\begin{figure}[h]
%	\centering
%	\includegraphics[width=5cm, height = 5cm]{iv1}
%	\caption{An minimum coloring in the form $F_8$ for an unbalanced complete tripartite $3$-uniform hypergraph with $n_1=2$, $3 \leqslant n_2$ and $ n_3 = n_2+1$.}
%\end{figure}

%\begin{figure}[h]
%	\centering
%	\includegraphics[width=5cm, height = 5cm]{iv2}
%	\caption{An minimum coloring in the form $F_6$ for an unbalanced complete tripartite $3$-uniform hypergraph with $n_1=2$, $3 \leqslant n_2$ and $ n_3 = n_2+1$.}
%\end{figure} 

%%%%%%%%%%%%%%%%%%%%
\section{Proof of Theorem~\ref{thm:3}}
\label{sec:thm:3}
\begin{proof}[Proof of Theorem~\ref{thm:3}]
Assume that $ k\geqslant 2$. Let $H$ be a balanced complete $k$-partite $(r+1)$-uniform hypergraph with $n \geqslant r+1$ vertices in each class and let $N=kn$. Let $c$ be a red/blue/green coloring of $H$ with the numbers of red, blue and green vertices of the $i^{th}$ class equal to $r_i, b_i$ and $g_i$, respectively, and let $R$, $B$ and $G$ be the total numbers of red, blue and green vertices, respectively.

Let $\bigtriangleup_{i{i'}} M(H,c,r,b)$ be the change in the number of monochromatic edges if a red vertex in the $i^{th}$ class is recolored into blue and a blue vertex in the ${i'}^{th}$ class is recolored into red. The definitions are similar for other colors combinations. The process will be called a \textit{swapping} which results in a new coloring, say $c'$. As a result of the process, the number of red vertices in the $i^{th}$ class decreases by $1$ and the number of red vertices in the ${i'}^{th}$ class increases by $1$ while the total number of red vertices remains the same. In other words, the coloring $c'$ has $r_i - 1$ and $r_{i'}+1$ red vertices in the $i^{th}$ and ${i'}^{th}$ classes, respectively. Likewise, the coloring $c'$ has $b_i+1$ and $b_{i'}-1$ blue vertices in the $i^{th}$ and ${i'}^{th}$ classes, respectively.

We can compute $\bigtriangleup_{i{i'}} M(H,c,r,b)$ by comparing the numbers of monochromatic edges containing those vertices undergone swapping before and after the swapping process. Thus,
\begin{align*}
\bigtriangleup_{i{i'}}M(H,c,r,b)&= \left[ {B-1 \choose r}-{b_i \choose r} + {R-1 \choose r}- {r_{i'}\choose r} \right]\\ &\;\;\;\;- \left[ {R-1 \choose r}-{r_i-1 \choose r} + {B-1 \choose r} - {b_{i'}-1\choose r} \right]\\
&= \left[ {r_i-1 \choose r} + {b_{i'}-1\choose r} \right] - \left[ {b_i \choose r} +  {r_{i'}\choose r} \right].
\end{align*}
A \textit{successful swapping} is a swapping in such a way that the number of monochromatic edges is reduced, i.e., $\bigtriangleup_{i{i'}} M(H,c,r,b) < 0$. Note that if $0<r_i<n$, $<b_i<n$, $r_{i'}=n-1$ and $b_{i'}=1$, then
\begin{align*}
\bigtriangleup_{i{i'}}M(H,c,r,b)&= \left[ {r_i-1 \choose r} + {b_{i'}-1\choose r} \right] - \left[ {b_i \choose r} +  {r_{i'}\choose r} \right]\\
&=\left[ {r_i-1 \choose r} + {1-1\choose r} \right] - \left[ {b_i \choose r} +  {n-1\choose r} \right].
\end{align*} Since $r_i<n$, $0<b_i$ and $n-1 \geqslant r$, we have that $\bigtriangleup_{i{i'}}M(H,c,r,b)<0$. This implies that a swapping resulting in fewer polychromatic classes is always successful.

\begin{lemma} If $\bigtriangleup_{i{i'}} M(H,c,\text{r},\text{b}) \leqslant 0$, then $\bigtriangleup_{i{i'}} M(H,c',\text{r},\text{b}) \leqslant 0$. \end{lemma}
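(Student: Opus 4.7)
The plan is to mirror the argument used for Lemma 8 in the previous section, which treats the two--colour case. The formula for $\bigtriangleup_{ii'} M(H,c,\text{r},\text{b})$ stated just above Lemma 10 depends only on the four quantities $r_i$, $b_i$, $r_{i'}$, $b_{i'}$, and the effect of the swap on these is entirely local: the new coloring $c'$ has $r_i'=r_i-1$, $b_i'=b_i+1$, $r_{i'}'=r_{i'}+1$ and $b_{i'}'=b_{i'}-1$ (all other class counts, and the totals $R$, $B$, $G$, are unchanged). So I would begin by substituting these four updated counts into the same formula, obtaining
\begin{align*}
\bigtriangleup_{ii'} M(H,c',\text{r},\text{b})
&= \left[\binom{r_i-2}{r}+\binom{b_{i'}-2}{r}\right]-\left[\binom{b_i+1}{r}+\binom{r_{i'}+1}{r}\right].
\end{align*}

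The main step is then a purely term--by--term comparison against $\bigtriangleup_{ii'} M(H,c,\text{r},\text{b})$. Since $\binom{m}{r}$ is non--decreasing in $m$, I get $\binom{r_i-2}{r}\leqslant\binom{r_i-1}{r}$ and $\binom{b_{i'}-2}{r}\leqslant\binom{b_{i'}-1}{r}$ (shrinking the positive contribution), and $\binom{b_i+1}{r}\geqslant\binom{b_i}{r}$ together with $\binom{r_{i'}+1}{r}\geqslant\binom{r_{i'}}{r}$ (enlarging the subtracted contribution). Adding the four monotonicity inequalities gives $\bigtriangleup_{ii'} M(H,c',\text{r},\text{b}) \leqslant \bigtriangleup_{ii'} M(H,c,\text{r},\text{b})$, and the hypothesis $\bigtriangleup_{ii'} M(H,c,\text{r},\text{b})\leqslant 0$ then yields the conclusion.

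I do not expect any real obstacle: unlike Lemma 8, Lemma 10 asks only for a weak inequality $\leqslant 0$, so there is no need to track the equality cases or rule out the degenerate situation where every relevant binomial coefficient vanishes. The only point to mention is that the swap producing $c'$ from $c$ must be admissible, which is implicit in writing $\bigtriangleup_{ii'} M(H,c,\text{r},\text{b})$ in the first place (namely $r_i\geqslant 1$ and $b_{i'}\geqslant 1$); the binomial coefficients $\binom{r_i-2}{r}$ and $\binom{b_{i'}-2}{r}$ are understood as $0$ whenever their upper argument drops below $r$, so the monotonicity inequalities remain valid without further case analysis.
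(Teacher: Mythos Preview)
Your proposal is correct and follows exactly the approach the paper intends: it simply says ``the proof of this lemma is similar to that of Lemma 8,'' and your argument---substituting the updated counts $r_i-1$, $b_i+1$, $r_{i'}+1$, $b_{i'}-1$ into the formula for $\bigtriangleup_{ii'}M$ and comparing term by term via monotonicity of $\binom{m}{r}$---is precisely that analogue. Your observation that only a weak inequality is required, so no equality analysis is needed, is also exactly the distinction the paper draws between Lemmas 8 and 10.
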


%\begin{proof}\begin{align*}
%	\bigtriangleup_{i{i'}}&M(H,c',\text{r},\text{b})\\ &= \left[ {(r_i-1)-1 \choose r} + {(b_{i'}-1)-1\choose r} \right] - \left[ {b_i+1 \choose r} +  {r_{i'}+1\choose r} \right]\\
%	&= \left[ {r_i-2 \choose r} + {b_{i'}-2\choose r} \right] - \left[ {b_i+1 \choose r} +  {r_{i'}+1\choose r} \right]\\
%	&\leqslant \left[ {r_i-1 \choose r} + {b_{i'}-1\choose r} \right] - \left[ {b_i+1 \choose r} +  {r_{i'}+1\choose r} \right]\\
%	&\leqslant \left[ {r_i-1 \choose r} + {b_{i'}-1\choose r} \right] - \left[ {b_i \choose r} +  {r_{i'}\choose r} \right]\\
%	&=\bigtriangleup_{i{i'}} M(H,c,\text{r},\text{b}) \leqslant 0.
%	\end{align*} The inequalities are equal when ($r_i-1<r$ and $b_{i'}-1<r$) and ($b_i+1<r$ and $r_{i'}+1<r$) and ($\bigtriangleup_{i{i'}}M(H,c,\text{r},\text{b})=0$), respectively.
%\end{proof}
The proof of this lemma is similar to that of Lemma 8. Lemma 10 means that if a swapping can be done without increasing the number of monochromatic edges, another swapping in the same direction will be successful (if there is a red and blue vertices to be swapped). The process of successful swappings will terminate when the $i^{th}$ class has no red vertex or the ${i'}^{th}$ class has no blue vertex.
\begin{lemma}
	If $\bigtriangleup_{i{i'}} M(H,c,\text{r},\text{b}) \geqslant 0$, then $\bigtriangleup_{i{i'}} M(H,c,\text{b},\text{r}) \leqslant 0$.
\end{lemma}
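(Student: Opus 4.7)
My plan is to mirror the proof of Lemma~9 almost verbatim, with the role of ``red'' and ``blue'' interchanged. First, I would record the formula for the reverse swap, obtained from the displayed expression for $\bigtriangleup_{i{i'}} M(H,c,\text{r},\text{b})$ by swapping the letters $r$ and $b$ throughout:
\[
\bigtriangleup_{i{i'}} M(H,c,\text{b},\text{r}) = \left[ {b_i-1 \choose r} + {r_{i'}-1\choose r} \right] - \left[ {r_i \choose r} +  {b_{i'}\choose r} \right].
\]
This is justified by re-running the ``before vs.\ after'' edge-count carried out just before Lemma~10, now with the directions of recolouring swapped.

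The key algebraic step will then be to apply the trivial monotonicity $\binom{m-1}{r} \leq \binom{m}{r}$ to each of the four binomial coefficients on the right-hand side, obtaining
\[
\bigtriangleup_{i{i'}} M(H,c,\text{b},\text{r}) \;\leq\; \left[ {b_i \choose r} + {r_{i'}\choose r} \right] - \left[ {r_i-1 \choose r} +  {b_{i'}-1\choose r} \right] \;=\; -\,\bigtriangleup_{i{i'}} M(H,c,\text{r},\text{b}),
\]
where the final equality is just the original formula for $\bigtriangleup_{i{i'}} M(H,c,\text{r},\text{b})$ rewritten with the sign reversed. Combining this inequality with the hypothesis $\bigtriangleup_{i{i'}} M(H,c,\text{r},\text{b}) \geq 0$ immediately yields $\bigtriangleup_{i{i'}} M(H,c,\text{b},\text{r}) \leq 0$, as required.

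I do not anticipate any serious obstacle. Unlike Lemma~9, the conclusion here is non-strict, so I do not have to chase the equality case and hence do not need any hypothesis of the form $n \geq r+1$ to exclude degenerate configurations; the four monotonicity inequalities used above hold for arbitrary non-negative upper indices. The only mild care needed is to verify, by re-deriving it from the definition of the swap, that the formula for $\bigtriangleup_{i{i'}} M(H,c,\text{b},\text{r})$ really is the one obtained by the colour interchange, which follows directly because the swapping operation is itself symmetric in the two colours being exchanged.
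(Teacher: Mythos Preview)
Your proposal is correct and is precisely the argument the paper has in mind: it explicitly states that the proof of this lemma ``is similar to that of Lemma~9,'' and your computation reproduces that proof with the obvious colour interchange, together with the observation that the non-strict conclusion means no analysis of the equality case is required.
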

%\begin{proof}
%	\begin{align*}
%	\bigtriangleup_{i{i'}} M(H,c,\text{b},\text{r}) &= \left[ {b_i-1 \choose r} + {r_{i'}-1\choose r} \right] - \left[ {r_i \choose r} +  {b_{i'}\choose r} \right]\\
%	&\leqslant \left[ {b_i \choose r} + {r_{i'}\choose r} \right] - \left[ {r_i \choose r} +  {b_{i'}\choose r} \right]\\
%	&\leqslant \left[ {b_i \choose r} + {r_{i'}\choose r} \right] - \left[ {r_i-1 \choose r} +  {b_{i'}-1\choose r} \right]\\
%	&=-\left[\left[ {r_i-1 \choose r} +  {b_{i'}-1\choose r} \right] - \left[ {b_i \choose r} + {r_{i'}\choose r} \right]\right] \\
%	&= -\bigtriangleup_{i{i'}} M(H,c,\text{r},\text{b}) \leqslant 0.
%	\end{align*} The inequalities are equal when ($b_i<r$ and $r_{i'}<r$) and ($r_i<r$ and $b_{i'}<r$) and ($\bigtriangleup_{i{i'}}M(H,c,\text{r},\text{b})=0$), respectively.\end{proof} 
The proof of this lemma is similar to that of Lemma 9. However, in contrast to Lemma $9$, it is possible that the equality holds. Note that if $c$ contains two classes, the $i^{th}$ and ${i'}^{th}$, such that both of them contain at least one red vertex and one blue vertex, then there are two directions of swapping as follows:
\begin{enumerate}\item Swapping a red vertex of the $i^{th}$ class with a blue vertex of the ${i'}^{th}$ class.
	\item Swapping a blue vertex of the $i^{th}$ class with a red vertex of the ${i'}^{th}$ class.
\end{enumerate}
By Lemma 11, one of the two directions can be achieved without increasing the number of monochromatic edges. Moreover, by Lemma 10, we can continue swapping in the same direction until the $i^{th}$ class has no red vertex or the ${i'}^{th}$ class has no blue vertex and the number of monochromatic edges does not increase. Note that we get the same result when considering a swapping relating to other color combinations. Hence, the coloring with minimum number of monochromatic edges among colorings with constant number of red, blue and green vertices, is the coloring such that, for any two classes, they must have at most one color of vertices to be in common. We will list all these forms in the following table.
\begin{center}
	\begin{tabular}{ |P{3cm}|P{10cm}| }
		\hline
		Canonical forms & Descriptions \\
		\hline
		$F_1$& The first class contains three colors while the other classes are monochromatic.\\
		\hline
		$F_2$& The first class contains a pair of colors while the other classes are monochromatic.\\
		\hline
		$F_3$& The first and second classes contain different pairs colors while the other classes are monochromatic.\\
		\hline
		$F_4$& The first, second and third classes contain different pairs colors while the other classes are monochromatic.\\
		\hline
		$F_5$& All classes are monochromatic.\\
		\hline
	\end{tabular}
\end{center}

The first column illustrates the list of $5$ canonical forms and the second column describes the colors of vertices in each class. %WLOG, we assume that the very first classes are the only polychromatic classes. These canonical forms have at most three polychromatic classes and, for any two classes, they have at most one color of vertices to be in common. Remark that a swapping resulting in fewer polychromatic classes is always successful. This implies that a coloring with more than three polychromatic classes or there are two classes of it such that they have more than one color in common
%can be improved in such a way that it ends up in one of these forms which has fewer polychromatic classes and has fewer monochromatic edges. Hence, these canonical forms will be the candidates for the coloring with minimum number of monochromatic edges.
%\begin{figure}[h]
%	\centering
%	\includegraphics[width=15cm, height = 15cm]{5}
%	\caption{List of all canonical forms $F_1$ to $F_5$.}
%	\label{fig:5}
%\end{figure}

Suppose that $r_i \leqslant b_{i'}$. Let $\bigtriangleup_{i{i'}} M_T(H,c,r,b)$ be the change in the number of monochromatic edges if all red vertices in the $i^{th}$ class are recolored into blue and $r_i$ blue vertices in the ${i'}^{th}$ class are recolored into red. The process will be called \textit{total swapping} which results in a new coloring, say $c'$. We can compute $\bigtriangleup_{i{i'}} M_T(H,c,r,b)$ by summing the change in the number of monochromatic edges of the swappings. Thus,
\begin{align*}
\bigtriangleup_{i{i'}} &M_T(H,c,r,b)= \sum_{k=0}^{r_i-1}\left[ {r_i-1-k \choose r} + {b_{i'}-1-k\choose r} \right] - \left[ {b_i+k\choose r} +  {r_{i'}+k\choose r} \right]\\
&=\left[{0 \choose r}+{1 \choose r}+\cdots+{r_i-1 \choose r}\right]+\left[{b_{i'}-r_i \choose r}+{b_{i'}-r_i+1 \choose r}+\cdots+{b_{i'}-1 \choose r}\right] \\
&\;\;\;\;-\left[{b_i \choose r}+{b_i+1 \choose r}+\cdots+{b_i+r_i-1 \choose r}\right]-\left[{r_{i'} \choose r}+{r_{i'}+1 \choose r}+\cdots+{r_{i'}+r_i-1 \choose r}\right].
\end{align*}
 A \textit{successful total swapping} is a total swapping in such a way that the number of monochromatic edges is reduced.
\begin{lemma}
	Suppose that $1 \leqslant r_i \leqslant b_{i'}$. Then, $\bigtriangleup_{i{i'}} M_T(H,c,r,b)\leqslant0$  if $c$ satisfies at least one of the following conditions:
	\begin{enumerate}
		\item $b_{i'} < r_i+b_i$.
		\item $b_{i'} < r_i+r_{i'}$.
	\end{enumerate}
\end{lemma}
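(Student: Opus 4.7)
The plan is to bound $\bigtriangleup_{i{i'}} M_T(H,c,r,b)$ term-by-term using the monotonicity ${x \choose r} \leqslant {y \choose r}$ for $0 \leqslant x \leqslant y$ (a trivial instance of Proposition~4, obtained by taking the two smaller indices to coincide). In the expanded form already displayed in the excerpt, both the positive sum and the negative sum consist of $2r_i$ binomial coefficients indexed by $j \in \{0,1,\ldots,r_i-1\}$, namely positive terms ${j \choose r}$ and ${b_{i'}-r_i+j \choose r}$ against negative terms ${b_i+j \choose r}$ and ${r_{i'}+j \choose r}$. I will match one positive term with one negative term for each~$j$, choosing the pairing according to whether condition~1 or condition~2 is assumed.

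Assuming condition~1, $b_{i'} < r_i + b_i$, I would pair ${j \choose r}$ with ${r_{i'}+j \choose r}$ and ${b_{i'}-r_i+j \choose r}$ with ${b_i+j \choose r}$. The first inequality ${j \choose r} \leqslant {r_{i'}+j \choose r}$ is immediate from $r_{i'} \geqslant 0$; the second, ${b_{i'}-r_i+j \choose r} \leqslant {b_i+j \choose r}$, follows from $b_{i'} - r_i < b_i$, i.e.\ $b_{i'}-r_i+j < b_i+j$. Summing over~$j$ gives $\bigtriangleup_{i{i'}} M_T(H,c,r,b) \leqslant 0$. The hypothesis $r_i \leqslant b_{i'}$ ensures $b_{i'}-r_i+j \geqslant 0$, so all binomial coefficients are legitimately defined.

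Assuming condition~2, $b_{i'} < r_i + r_{i'}$, I instead pair ${j \choose r}$ with ${b_i+j \choose r}$ and ${b_{i'}-r_i+j \choose r}$ with ${r_{i'}+j \choose r}$. The two required inequalities now follow from $b_i \geqslant 0$ and from $b_{i'} - r_i < r_{i'}$ respectively, and summing over~$j$ again yields $\bigtriangleup_{i{i'}} M_T(H,c,r,b) \leqslant 0$.

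There is no genuine obstacle; the argument is routine bookkeeping that reduces to selecting the appropriate pairing of positive and negative binomial terms under each hypothesis. The only point requiring care is the verification that every lower argument of every binomial is non-negative, which follows from $r_i \leqslant b_{i'}$ together with $b_i, r_{i'} \geqslant 0$.
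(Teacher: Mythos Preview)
Your proof is correct and follows essentially the same approach as the paper's: under each hypothesis you pair the positive block $\{{b_{i'}-r_i+j \choose r}\}$ with whichever negative block it is dominated by (the $b_i$-block under condition~1, the $r_{i'}$-block under condition~2), and then the remaining positive block $\{{j \choose r}\}$ is trivially dominated by the other negative block. The paper presents the same two pairings, merely displaying the intermediate expression after the first cancellation rather than spelling out both pairings explicitly.
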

\begin{proof}
	Suppose that $ 1 \leqslant r_i \leqslant b_{i'}$. If $b_{i'} < r_i+b_i$, then \begin{align*}
	\bigtriangleup_{i{i'}} &M_T(H,c,r,b)\\
	&\leqslant\left[{0 \choose r}+{1 \choose r}+\cdots+{r_i-1 \choose r}\right]-\left[{r_{i'} \choose r}+{r_{i'}+1 \choose r}+\cdots+{r_{i'}+r_i-1 \choose r}\right]\leqslant 0.
	\end{align*}The equality holds only when $b_i+r_i-1<r$ and $r_{i'}+r_i-1<r$, i.e., $b_i+r_i<r+1$ and $r_i+r_{i'}<r+1$. If $b_{i'} < r_i+r_{i'}$, then \begin{align*}
	\bigtriangleup_{i{i'}} M_T(H,c,r,b)& \leqslant\left[{0 \choose r}+{1 \choose r}+\cdots+{r_i-1 \choose r}\right]-\left[{b_i \choose r}+{b_i+1 \choose r}+\cdots+{b_i+r_i-1 \choose r}\right]\\
	&\leqslant 0.
	\end{align*}The equality holds only when $b_i+r_i-1<r$ and $r_{i'}+r_i-1<r$, i.e., $b_i+r_i<r+1$ and $r_i+r_{i'}<r+1$.
\end{proof}

Note that if we consider a class with only two colors say $r_i+b_i=n \geqslant r+1$, then $\bigtriangleup_{i{i'}} M_T(H,c,r,b)$ is strictly less than zero in both cases. We will use Lemma $12$ to get more information from the canonical forms. From this point, the term \textit{quadruple} refers to a collection of $4$ values which are the numbers of vertices of any pair of colors in any pair of classes, e.g. $(r_i,b_i,r_{i'},b_{i'})$. Consequently, in each coloring in the forms any canonical forms, all quadruples contain at least one zero. Suppose that $c$ contains the $i^{th}$ and ${i'}^{th}$ classes such that they have a color in common but the $i^{th}$ class contains a color that the ${i'}^{th}$ class does not, say we focus on $(r_i\neq 0,b_i\neq 0,r_{i'} =0,b_{i'}\neq 0)$. If $c$ is a minimum coloring with $r_i+b_i=n$, then we can conclude by Lemma $12$ that
\begin{enumerate}
	\item if $r_i \leqslant b_{i'}$, then $b_{i'} \geqslant r_i+b_i$ and $b_{i'} \geqslant r_i  +r_{i'}=r_i$, and
	\item if $r_i \geqslant b_{i'}$, then $r_i \geqslant b_i + b_{i'}$ and $r_i \geqslant r_{i'}+b_{i'}=b_{i'} $.
\end{enumerate} We will call these \textit{quadruple conditions}.
%\begin{figure}[h]
%	\centering
%	\includegraphics[width=9cm, height = 9cm]{6}
%	\caption{A quadruple $(r_i,b_i,r_{i'} =0,b_{i'})$ of red and b in the $i^{th}$ and ${i'}^{th}$ classes with $r_i+b_i=n$.}
%	\label{fig:6}
%\end{figure}

Next, we will focus on the possibility of $F_4$. Suppose that $c$ is the coloring that has minimum number of monochromatic edges which is in the form $F_4$ such that, WLOG, the first class has $g_1 \neq 0$ green and $r_1 \neq 0$ red vertices, the second class has $g_2 \neq 0$ green and $b_2 \neq 0$ blue vertices and the third class has $r_3 \neq 0$ red and $b_3 \neq 0$ blue vertices, where $g_1$ is the maximum among those values. We will apply quadruple conditions to the quadruple $(g_1,b_1=0,g_2,b_2)$. Since $g_1 \geqslant b_2$ and $g_2+b_2=n$, $n = g_2 + b_2 \leqslant g_1 = n - r_1 < n$, which is a contradiction. Hence, an minimum coloring cannot be in the form $F_4$ and this form will be out of our interest. Consequently, we will search for the minimum colorings only from $F_1, F_2, F_3$ and $F_5$. 

We will divide this into $2$ cases upon the remainder of the number $k$ of vertex classes divided by $3$. For simplicity, we define a \textit{type i} hypergraph to be a hypergraph with $k\equiv i \ (\textrm{mod}\ 3)$ classes for $i=0,1,2$. We will consider a type 0 hypergraph first.

\textit{Case 0}: $H$ is a type 0 hypergraph.

Since the number of classes is divisible by the number of colors, it is done by Proposition 7.

\textit{Case 1}: $H$ is a type 1 hypergraph.

\textit{Case 1.1}: The coloring $c$ is in the form $F_3$.

In this case, our aim is to show that all colorings in the form $F_3$ have more monochromatic edges than some coloring in the form $F_2$. We have that $c$ has two classes that are polychromatic, say the first class contains green and red vertices and the second class contains green and blue vertices, while the rest classes are monochromatic. The number of vertices of each color and the number of monochromatic classes of each color are shown in the table below.
\begin{center}
	\begin{tabular}{ |P{1.3cm}|P{1.4cm}|P{1.4cm}|P{1.4cm}|P{1.4cm}|P{1.4cm}|P{1.4cm}|   }
		\hline
		\multicolumn{4}{|c|}{Polychromatic Classes} & \multicolumn{3}{|c|}{Monochromatic Classes} \\
		\hline
		Classes&Red vertices & Blue vertices&Green vertices &Red classes &Blue classes&Green classes\\
		\hline
		$1$&$r_1 \neq 0$&   $0$&$g_1 \neq 0$&$k_r$  &$k_b$&$k_g$\\
		$2$&$0$   &$b_2 \neq 0$&$g_2 \neq 0$&$ $&   $ $&   $ $\\
		
		\hline
	\end{tabular}
\end{center}

We will show that the coloring such that $k_r$, $k_b$ and $k_g$ are as equal as possible has smaller number of monochromatic edges than the coloring those of which are not. WLOG, suppose that $c$ has $k_g$ green classes and $k_r$ red classes such that $k_g - k_r \geqslant 2$. We will recolor all vertices in a green class into red and get a new coloring $c'$. Thus,
\begin{align*}
M(H,c)&= { nk_r+ r_1 \choose r+1}+ { nk_b+ b_2 \choose r+1}+ { nk_g+ g_1+g_2 \choose r+1}\\
&\;\;\;\;-{r_1 \choose r+1}-{b_2 \choose r+1}-{g_1 \choose r+1}-{g_2 \choose r+1} -(k-2){n \choose r+1}
\end{align*}and
\begin{align*}
M(H,c')&= { n(k_r+1)+ r_1 \choose r+1}+ { nk_b+ b_2 \choose r+1}+ { n(k_g-1)+ g_1+g_2 \choose r+1}\\
&\;\;\;\;-{r_1 \choose r+1}-{b_2 \choose r+1}-{g_1 \choose r+1}-{g_2 \choose r+1} -(k-2){n \choose r+1}.
\end{align*}Then,
\begin{align*}
M(H,c')-M(H,c)&= \left[ { n(k_r+1)+ r_1 \choose r+1}+ { n(k_g-1)+ g_1+g_2 \choose r+1}\right]\\&\;\;\;\;-\left[{ nk_r+ r_1 \choose r+1}+ { nk_g+ g_1+g_2 \choose r+1}\right].
\end{align*} We will show that $M(H,c')-M(H,c) < 0$ by Proposition 4. We have $\left[n(k_r+1)+ r_1\right]+\left[n(k_g-1)+ g_1+g_2\right] = \left[nk_r+ r_1\right]+ \left[nk_g+ g_1+g_2\right]$, $nk_r+ r_1 < n(k_r+1)+ r_1 \leqslant n(k_g-1)+r_1 < nk_g+ g_1+g_2$ and $nk_r+ r_1 < n(k_r+1)+ g_1+g_2 \leqslant n(k_g-1)+g_1+g_2 < nk_g+ g_1+g_2$. Since $nk_g+g_1+g_2>r+1$, we have that $M(H,c')-M(H,c) < 0$. Note that we only use the fact that $r_1 < n$ and $g_1+g_2 >0$. 

Since $(k-2)\equiv 2 \ (\textrm{mod}\ 3)$, colorings such that $k_r$, $k_b$ and $k_g$ are as equal as possible must have one of these conditions:
\begin{enumerate}
	\item $k_r+1=k_b=k_g$,
	\item $k_r=k_b+1=k_g$,
	\item $k_r=k_b=k_g+1$.
\end{enumerate}

We will show that a coloring with the last condition has more number of monochromatic edges than a coloring with either of the first two conditions (with the same polychromatic classes). Suppose that $c$ is a coloring such that $k_r=k_b=k_g+1$. We consider $(g_1,r_1,g_2,0)$ of $c$. Since $g_1+r_1=n$, if $g_2 \geqslant r_1$, then $g_2 \geqslant r_1+g_1$. This leads to a contradiction since $n = r_1+g_1 \leqslant g_2 = n-b_2 <n$. Hence, $g_2 < r_1$ and consequently $g_1+g_2 \leqslant r_1$ by quadruple conditions. Next, we will increase the number of green class. WLOG, we will recolor all vertices in a red class into green and get a new coloring $c'$ with condition (1). Then,
\begin{align*}
M(H,c)&= { nk_r+ r_1 \choose r+1}+ { nk_b+ b_2 \choose r+1}+ { nk_g+ g_1+g_2 \choose r+1}\\
&\;\;\;\;-{r_1 \choose r+1}-{b_2 \choose r+1}-{g_1 \choose r+1}-{g_2 \choose r+1} -(k-2){n \choose r+1}
\end{align*}and
\begin{align*}
M(H,c')&= { n(k_r-1)+ r_1 \choose r+1}+ { nk_b+ b_2 \choose r+1}+ { n(k_g+1)+ g_1+g_2 \choose r+1}\\
&\;\;\;\;-{r_1 \choose r+1}-{b_2 \choose r+1}-{g_1 \choose r+1}-{g_2 \choose r+1} -(k-2){n \choose r+1}.
\end{align*}Then,
\begin{align*}
M(H,c')-M(H,c)&= \left[ { n(k_r-1)+ r_1 \choose r+1}+ { n(k_g+1)+ g_1+g_2 \choose r+1}\right]\\&\;\;\;\;-\left[{ nk_r+ r_1 \choose r+1}+ { nk_g+ g_1+g_2 \choose r+1}\right].
\end{align*} We will show that $M(H,c')-M(H,c) < 0$ by Proposition $4$. We have $\left[n(k_r-1)+ r_1\right]+\left[n(k_g+1)+ g_1+g_2\right] =\left[ nk_r+ r_1\right]+ \left[nk_g+ g_1+g_2\right]$. Since $g_1+g_2 \leqslant r_1$, then $nk_g+ g_1+g_2 \leqslant nk_g +r_1 = n(k_r-1) +r_1 < nk_r+r_1$ and $nk_g+ g_1+g_2 < n(k_g+1)+ g_1+g_2 \leqslant n(k_g+1)+r_1 = nk_r+ r_1$. Since $nk_g+g_1+g_2>r+1$,  we have that $M(H,c')-M(H,c) < 0$.

Finally, we will consider a coloring with condition (1) or (2). By symmetry, assume that $c$ is a coloring with $k_r+1=k_b=k_g$. We will recolor a green vertex in the first class of $c$ into red. Then,
\begin{align*}
\bigtriangleup_1M(H,c) %&= \left[ {nk_r+r_1 \choose r} - {r_1 \choose r}\right] - \left[ {nk_g+g_1+g_2-1 \choose r} - {g_1-1 \choose r}\right]\\
= \left[ {nk_r+r_1 \choose r}- {nk_g+g_1+g_2-1 \choose r} \right] + \left[{g_1-1 \choose r} -{r_1 \choose r} \right].
\end{align*} We have $nk_r+r_1 = n(k_g-1)+r_1 < nk_g < nk_g+g_1+g_2-1$ and $g_1-1<g_1+g_2 \leqslant r_1$. Since $nk_g+g_1+g_2-1>r$,  we have that  $\bigtriangleup_1M(H,c) < 0$. This is true for any arbitrary value of $r_1 > 0$. Hence, we will recolor a green vertex into red until the first class is a red class which is a coloring in the form $F_2$ and get fewer number of monochromatic edges. Consequently, we can conclude that $c$ has more monochromatic edges than a coloring in the form $F_2$.

\textit{Case 1.2}: The coloring $c$ is in the form $F_5$.

In this case, our aim is to show that all colorings in the form $F_5$ have more monochromatic edges than some coloring in the form $F_2$. We have that all classes of $c$ are monochromatic with $k_r$, $k_b$ and $k_g$ red, blue and green classes, respectively. We can show that the coloring that $k_r$, $k_b$ and $k_g$ are as equal as possible has smaller number of monochromatic edges similarly as in \textit{Case 1.1}. Since $k\equiv 1 \ (\textrm{mod}\ 3)$, colorings that $k_r$, $k_b$ and $k_g$ are as equal as possible must have one of these conditions:
\begin{enumerate}
	\item $k_r-1=k_b=k_g$,
	\item $k_r=k_b-1=k_g$,
	\item $k_r=k_b=k_g-1$.
\end{enumerate}

By symmetry, suppose that $c$ is a coloring such that $k_r-1=k_b=k_g$. We will show that if a red vertex in a red class, say the first class, is recolored into green, the number of monochromatic edges will decrease. The new coloring is not in the form $F_5$ but in the form $F_2$ instead. Then, 
\begin{align*}
\bigtriangleup_1M(H,c) %&= \left[ {nk_g \choose r} - {0 \choose r}\right] - \left[ {nk_r-1 \choose r} - {r_1-1 \choose r}\right]\\
= \left[ {nk_g \choose r}+{n-1 \choose r} \right] - \left[ {nk_r-1 \choose r} +{0 \choose r} \right].
\end{align*} we will show that $\bigtriangleup_1M(H,c) < 0$ by Proposition $4$. We have $nk_g+ (n-1)= n(k_g+1) -1 = (nk_r-1)+0$ and $0 < n-1 < nk_g <nk_r-1$. Since $nk_r-1>r$, we have that $\bigtriangleup_1M(H,c) < 0$. Consequently, we can conclude that $c$ has more monochromatic edges than a coloring in the form $F_2$.

\textit{Case 1.3}: The coloring $c$ is in the form $F_2$.

In this case, our aim is to show that all colorings in the form $F_2$ have more monochromatic edges than some coloring in the form $F_1$. We have that $c$ has a class that is polychromatic, WLOG, say the first class contains red and blue vertices while the rest classes are monochromatic. Note that the number of vertices of each color and the number of monochromatic classes of each color are shown in the table below.
\begin{center}
	\begin{tabular}{ |P{1.3cm}|P{1.4cm}|P{1.4cm}|P{1.4cm}|P{1.4cm}|P{1.4cm}|P{1.4cm}|   }
		\hline
		\multicolumn{4}{|c|}{Polychromatic Classes} & \multicolumn{3}{|c|}{Monochromatic Classes} \\
		\hline
		Classes&Red vertices & Blue vertices&Green vertices &Red classes &Blue classes&Green classes\\
		\hline
		$1$&$r_1 \neq 0$&   $b_1 \neq 0$&$0$&$k_r$  &$k_b$&$k_g$\\
		\hline
	\end{tabular}
\end{center} We can show that the coloring such that $k_r$, $k_b$ and $k_g$ are as equal as possible has smaller number of monochromatic edges similarly as in \textit{Case 1.1}. Hence, we will focus on a coloring that $k_r$, $k_b$ and $k_g$ are as equal as possible. Since $(k-1)\equiv 0 \ (\textrm{mod}\ 3)$, colorings such that $k_r$, $k_b$ and $k_g$ are as equal as possible must have $k_r=k_b=k_g$.

Suppose that $c$ is a coloring such that $k_r=k_b=k_g$. By symmetry, suppose that $r_1 > 1$. We will show that if a red vertex in the first class is recolored into green, the number of monochromatic edges will decrease. The new coloring is not in the form $F_2$ but in the form $F_1$ instead. Then, 
\begin{align*}
\bigtriangleup_1M(H,c) &= \left[ {nk_g \choose r} - {0 \choose r}\right] - \left[ {nk_r+r_1-1 \choose r} - {r_1-1 \choose r}\right]\\
&= \left[ {nk_g \choose r}+{r_1-1 \choose r} \right] - \left[ {nk_r +r_1-1 \choose r} +{0 \choose r} \right].
\end{align*} We will show that $\bigtriangleup_1M(H,c) < 0$ by Proposition $4$. We have $nk_g+ (r_1-1)= (nk_r +r_1 -1)+0$ and $0 < r_1-1 < nk_g <nk_r+r_1-1$. Since $nk_r+r_1-1>r$, we have that $\bigtriangleup_1M(H,c) < 0$. Consequently, we can conclude that $c$ has more monochromatic edges than a coloring in the form $F_1$.

\textit{Case 1.4}: The coloring $c$ is in the form $F_1$.

In this case, our aim is to show that a coloring in the form $F_1$ such that the numbers of red, blue and green vertices are as equal as possible has the minimum number of monochromatic edges. We have that $c$ has a class that is polychromatic, WLOG, say the first class contains red, blue and green vertices while the rest classes are monochromatic. Note that the number of vertices of each color and the number of monochromatic classes of each color are shown in the table below.
\begin{center}
	\begin{tabular}{ |P{1.3cm}|P{1.4cm}|P{1.4cm}|P{1.4cm}|P{1.4cm}|P{1.4cm}|P{1.4cm}|   }
		\hline
		\multicolumn{4}{|c|}{Polychromatic Classes} & \multicolumn{3}{|c|}{Monochromatic Classes} \\
		\hline
		Classes&Red vertices & Blue vertices&Green vertices &Red classes &Blue classes&Green classes\\
		\hline
		$1$&$r_1 \neq 0$&   $b_1 \neq 0$&$g_1 \neq 0$&$k_r$  &$k_b$&$k_g$\\
		\hline
	\end{tabular}
\end{center} We can show that the coloring that $k_r$, $k_b$ and $k_g$ are as equal as possible has smaller number of monochromatic edges similarly as in \textit{Case 1.1}. Since $(k-1)\equiv 0 \ (\textrm{mod}\ 3)$, colorings such that $k_r$, $k_b$ and $k_g$ are as equal as possible must have $k_r=k_b=k_g$.

Suppose that $c$ is a coloring such that $k_r=k_b=k_g$. We will show that if we recolor the vertices in such a way that $r_1, b_1$ and $g_1$ are as equal as possible then the number of monochromatic edges will decrease. WLOG, assume that $r_1-g_1 \geqslant 2$. We will show that if a red vertex in the first class is recolored into green, the number of monochromatic edges will decrease. Then, 
\begin{align*}
\bigtriangleup_1M(H,c) &= \left[ {nk_g +g_1 \choose r} - {g_1 \choose r}\right] - \left[ {nk_r+r_1-1 \choose r} - {r_1-1 \choose r}\right]\\
&= \left[ {nk_g +g_1 \choose r}+{r_1-1 \choose r} \right] - \left[ {nk_r +r_1-1 \choose r} +{g_1 \choose r} \right].
\end{align*} We will show that $\bigtriangleup_1M(H,c) < 0$ by Proposition $4$. We have $(nk_g+g_1)+ (r_1-1)= (nk_r +r_1 -1)+g_1$ and $g_1 < r_1-1 < nk_g+g_1 <nk_r+r_1-1$. Since $nk_r+r_1-1>r$, we have that $\bigtriangleup_1M(H,c) < 0$. Consequently, we can conclude that $c$ is not a minimum coloring.

To sum up, we have proved that all colorings in the forms $F_2, F_3$ and $F_5$ have strictly more number of monochromatic edges than some coloring in the form $F_1$. Moreover, a coloring in the form $F_1$ where the numbers of red, blue and green vertices in the first class are not as equal as possible has strictly more monochromatic edges than the coloring $c^*$ where the numbers of red, blue and green classes are as equal as possible and the number of red, blue and green vertices in the first class are as equal as possible. Hence, this is the minimum coloring and it is unique up to a permutation of colors and classes.
\end{proof}

%%%%%%%%%%%%%%%%%%%%
\section{Concluding remarks}
\label{sec:conclude}
In this paper, we considered $2$-colorings of balanced complete $k$-partite $r$-uniform hypergraphs and determined which one has the minimum number of monochromatic edges. The proof may give a clue for further generalization to an unbalanced hypergraph with arbitrary sizes of classes. We observed that the minimum coloring can only be in certain forms which are called canonical forms of the colorings. We studied the canonical forms of $2$-colorings of unbalanced complete tripartite $3$-uniform hypergraphs. Finally, we continued to determine the extermal $3$-coloring of balanced complete $k$-partite $r$-uniform hypergraphs when $k \equiv 0,1 \mod{3}$.

In Theorem \ref{thm:2}, we determined the minimum coloring for unbalanced $3$-uniform hypergraphs. In the proof, almost all comparisons have been done without expanding the binomial coefficient terms and they also hold if we try to generalize the proof to $r$-uniform hypergraphs with arbitrary $r \leqslant n_1$. However, in the \textit{Case 5B}, it cannot be done without expanding those binomial coefficient terms where, in this case, we expand with $r+1=3$ as the lower index. Hence, the proof works only for $3$-uniform hypergraphs. We believe that the minimum coloring for $r$-uniform hypergraphs with arbitrary $r \leqslant n_1$ differs from those in Theorem \ref{thm:2}.

%In the proof of Theorem \ref{thm:2}, we compare the numbers of monochromatic edges of colorings from different canonical forms. Almost all comparisons have been done without expanding the binomial coefficient terms. Therefore, those comparisons also hold if we try to generalize the proof to $r$-uniform hypergraph with arbitrary $r \leqslant n_1$. However, in the \textit{Case 5B}, it cannot be done without expanding those binomial coefficient terms to determine the extermal coloring which, in this case, we expand with $r+1=3$. Hence, Theorem \ref{thm:2} holds only for $3$-uniform hypergraphs. We conjecture that if the minimum coloring alters when $r+1 > 3$. 

Another generalized case is unbalanced complete hypergraphs with several vertex classes ($k>3$). The problem seems to be much more complicated because Theorem \ref{thm:2} demonstrates that the extremal coloring varies depending on the relationship among the sizes of the vertex classes.
\begin{problem}
	What is the minimum $2$-coloring of an unbalanced complete $k$-partite $r$-uniform hypergraph?
\end{problem}
We have determined the extermal $3$-coloring of balanced complete $k$-partite $r$-uniform hypergraphs only for $k\equiv 0,1 \mod{3}$. 
\begin{problem}
What is the minimum $3$-coloring of a balanced complete $k$-partite $r$-uniform hypergraph where $k\equiv 2 \mod{3}$?
\end{problem}

For $k \equiv 2 \mod{3}$, if we apply the same ideas as in the proof of Theorem \ref{thm:3}, we can conclude that the minimum coloring is in the form $F_3$ instead of $F_1$. However, the comparisons between colorings in the form $F_3$ are rather challenging and we believe that they require some further comparison tools. One might expect the minimum coloring to have approximately $\frac{\abs{V(H)}}{3}$ vertices in each color as in Theorem $4$. However, this is not the case, for example, when the number of vertices in each class is much greater than $k$.

For $m>3$, we have studied only some trivial cases for $m$-colorings of the hypergraphs in Section $2.6$.
\begin{problem}
	What is the minimum $m$-coloring of a balanced complete $k$-partite $r$-uniform hypergraph ?
\end{problem}
This is a generalization of the hypergraphs in Problem $15$ which is extremely complex as we believe that the minimum coloring varies depending on the relationship between number of colors and the number of classes. However, the proof of Theorem \ref{thm:3} might be useful to determine the minimum $m$-coloring of balanced complete $k$-partite $r$-uniform hypergraphs with $k\equiv 1 \mod{m}$, and we speculate that it would be in the form similar to the coloring in the form $F_1$.

Finally, there is another natural definition of $k$-partite hypergraphs where each edge is an $r$-subset containing vertices from different classes. 

\begin{problem}
	With the above definition of $k$-partite hypergraphs, what is the minimum coloring of a balanced complete $k$-partite $r$-uniform hypergraph?
\end{problem}
%\begin{conjecture}
%The minimum.
%\end{conjecture}
%%%%%%%%%%%%%%%%%%%%

\end{document}